\theoremstyle{plain}
\newtheorem{theorem}{Theorem}[section] 
\newtheorem{corollary}[theorem]{Corollary}
\newtheorem{definition}[theorem]{Definition}
\newtheorem{lemma}[theorem]{Lemma}
\date{}
\theoremstyle{definition}
\newtheorem{defn/prop}[theorem]{Définition/Proposition}
\newtheorem{defn/thm}[theorem]{Définition/Théorème}
\newtheorem{example}[theorem]{Example}
\newtheorem{rque}[theorem]{Remark}
\title{Gindikin-Karpelevich finiteness for Kac-Moody groups over local fields}
\author{Auguste \textsc{Hébert} \\Univ Lyon, UJM-Saint-Etienne CNRS\\ UMR 5208 CNRS, F-42023, SAINT-ETIENNE, France}
\makeatletter \@addtoreset{figure}{section}\makeatother
\newcommand{\R}{\mathbb{R}}
\newcommand{\A}{\mathbb{A}}
\newcommand{\N}{\mathbb{N}}
\newcommand{\Z}{\mathbb{Z}}
\newcommand{\C}{\mathbb{C}}
\newcommand{\Ne}{\mathbb{N}^*}
\newcommand{\I}{\mathcal{I}}
\newcommand{\T}{\mathcal{T}}
\newcommand{\Id}{\mathrm{Id}}
\begin{document}

\maketitle

\section{Introduction}
The classical Gindikin-Karpelevich formula was introduced in 1962 by Simon Gindikin and Fridrikh Karpelevich. It applies to real semi-simple Lie groups and it enables to compute certain Plancherel densities for semi-simple Lie groups. This formula was established in the non-archimedean case by Robert Langlands in \cite{langlands1971euler}. In 2014, in \cite{braverman2014affine}, Alexander Braverman, Howard Garland, David Kazhdan and Manish Patnaik obtained a generalisation of this formula in the affine Kac-Moody case.

Let $\mathcal{F}$ be a ultrametric field. Let $\mathcal{O}$ be its ring of integers, $\pi$ be a generator of the maximal ideal of $\mathcal{O}$ and $q$ denote the cardinal of the residue field $\mathcal{O}/\pi \mathcal{O}$. Let $\mathbf{G}$ be a split Kac-Moody group over $\mathcal{F}$. Let $\textbf{T}$ be a maximal split torus of $\textbf{G}$. Choose
  a pair $\textbf{B}$, $\textbf{B}^-$ of opposite Borel subgroups such that
   $\textbf{B}\cap \textbf{B}^-=\textbf{T}$ and $\textbf{U}$, $\textbf{U}^-$ be their
    unipotent radicals. We use boldface letters to denote schemes : $\textbf{G},\textbf{T},\ldots$ and their sets of $\mathcal{F}$-points are denoted by $G$,$T$,...  Let $\Lambda$ and $\Lambda^\vee$ be the root lattice and the coroot lattice of $\textbf{T}$. Let $R$ be the set of roots of $\textbf{G}$ and $R^+$ be its set of positive roots. Let $K=\textbf{G}(\mathcal{O})$.  Let $S^\vee$ be the set of simple coroots ($S^\vee$ depends on the choice of $\textbf{B}$). For a coroot $\nu^\vee=\sum_{s^\vee\in S^\vee} n_{s^\vee} s^\vee\in \Lambda^\vee$, one sets $|\nu^\vee|=\sum n_{s^\vee}$. Let $\Lambda^\vee_+=\bigoplus_{s^\vee\in S^\vee}\N s^\vee$ and $\Lambda^\vee_-=-\Lambda^\vee_+$. Let $\C[[\Lambda^\vee]]$ be the Looijenga's coweight algebra of $\textbf{G}$, with generators $e^{\lambda^\vee}$, for $\lambda^\vee\in \Lambda^\vee$. Alexander Braverman, Howard Garland, David Kazhdan and Manish Patnaik proved that when $\textbf{G}$ is the affine Kac-Moody group associated to a simply-connected semi-simple split group, we have the following formula:

    \begin{equation}\label{eqGK formula}
    \sum_{\mu^\vee\in\Lambda^\vee}|K\backslash K\pi^{\lambda^\vee}U^-\cap
K\pi^{\lambda^\vee-\mu^\vee}U|e^{\lambda^\vee-\mu^\vee}q^{|\lambda^\vee-\mu^\vee|}=\frac{1}{H_0}\prod_{\alpha\in R_+}(\frac{1-q^{-1}e^{-\alpha^\vee}}{1-e^{-\alpha^\vee}})^{m_\alpha}\end{equation}

where for $\alpha\in R_+$, $m_\alpha$ denotes the multiplicity of the coroot $\alpha^\vee$ in the Lie algebra $\mathfrak{g}$ of $\textbf{G}$, and $H_0$ is some  term (there is a formula describing $H_0$ in \cite{braverman2014affine}) depending on $\mathfrak{g}$. When $\textbf{G}$ is a reductive group, $H_0=1$, the $m_\alpha$ are equal to $1$ and this formula is equivalent to Gindikin-Karpelevich formula. 

\bigskip

The aim of this article is to show the following four theorems:

\medskip

\textbf{Theorem~\ref{thm inclusion}}:
Let $\mu^\vee\in \Lambda^\vee$. Then if $\mu^\vee\notin \Lambda^\vee_-$, $K\pi^{\lambda^\vee}U^- \cap K\pi^{\lambda^\vee+\mu^\vee} U$ is empty for all $\lambda^\vee\in \Lambda^\vee$.
 If $\mu^\vee\in \Lambda^\vee_+$, then for $\lambda^\vee\in \Lambda^\vee$ sufficiently
  dominant, $K\pi^{\lambda^\vee}U^- \cap K\pi^{\lambda^\vee+\mu^\vee} U\subset K\pi^{\lambda^\vee} K\cap K\pi^{\lambda^\vee+\mu^\vee}U$. 

\medskip
\textbf{Corollary~\ref{cor finitude des boules}}:
 Let $\mu^\vee\in \Lambda^\vee$. For all $\lambda\in \Lambda^\vee$ , $ K\backslash K\pi^{\lambda^\vee} K\cap K\pi^{\lambda^\vee+\mu^\vee}U$ is finite and is empty if $\mu^\vee\notin \Lambda^\vee_-$.

\medskip
\textbf{Theorem~\ref{thm invariance des cardinaux}}: 
Let $\mu^\vee\in \Lambda^\vee_-$ and $\lambda^\vee\in \Lambda^\vee$. Then $|K\backslash K\pi^{\lambda^\vee}U^- \cap K\pi^{\lambda^\vee+\mu^\vee} U|=|K\backslash K\pi^{0}U^- \cap K\pi^{\mu^\vee} U|$ and these sets are finite.

\medskip
\textbf{Theorem~\ref{thm égalité des ensembles bis}}:
 Let $\mu^\vee\in \Lambda^\vee$. Then for $\lambda^\vee\in \Lambda^\vee$ sufficiently dominant, $ K\pi^{\lambda^\vee}U^- \cap K\pi^{\lambda^\vee+\mu^\vee} U=K\pi^{\lambda^\vee} K\cap K\pi^{\lambda^\vee+\mu^\vee}U$.

\medskip

These theorems correspond to Theorem 1.9 of \cite{braverman2014affine} and this is a positive answer to Conjecture 4.4 of \cite{braverman2012rep}. 

  Theorem~\ref{thm invariance des cardinaux} is named "Gindikin-Karpelevich finiteness" in \cite{braverman2016iwahori} and it enables to make sense to the left side of formula~\ref{eqGK formula}. Thus this could be a first step to a generalization of Gindikin-Karpelevich formula to general split Kac-Moody groups. However, we do not know yet how to express the term $H_0$ when $\textbf{G}$ is not affine.

\medskip

The main tool that we use to prove these theorems is the masure $\I$ associated to $\textbf{G}$, which was defined by Stéphane Gaussent and  Guy Rousseau in \cite{gaussent2014spherical} and generalized by Guy Rousseau in \cite{rousseau2017almost}. This masure is a geometric object similar to a Bruhat-Tits building. It is a set covered by apartments isomorphic to a standard apartment $\A$ on which $\textbf{G}$ acts. The standard apartment $\A$ is an affine space with a structure obtained from a lattice $Y$ containing $\Lambda^\vee$ (which can be thought of as $\Lambda^\vee$ in a first approximation) and from a set of hyperplanes called walls, defined by the roots of $\textbf{G}$. In general, $\I$ is not a building: there may be two points such that no apartment of $\I$ contains these two points simultaneously. However some properties of euclidean buildings remain and, for example one can still define the retraction onto an apartment centred at a sector-germ. We can always define a fundamental vectorial chamber $C^v_f$ in $\A$ and its opposite, which gives rise to two retractions: $\rho_{+\infty}$ and $\rho_{-\infty}$. We also have a "vectorial distance" $d^v$ defined on a subset of $\I^2$. These applications enable to give other expressions to the sets defined above: for all $\lambda^\vee\in \Lambda^\vee$, $K\backslash K \pi^{\lambda^\vee} U^{-}$ corresponds to $\rho_{-\infty}^{-1}(\{\lambda^\vee\})$, $K\backslash K\pi^{\lambda^\vee} U$ corresponds to $\rho_{+\infty}^{-1}(\{\lambda^\vee\})$ and $K\backslash K\pi^{\lambda^\vee} K$ corresponds to $S^v(0,\lambda^\vee)=\{x\in \I|\ d^v(0,x)\mathrm{\ is\ defined\ and\ }d^v(0,x)=\lambda^\vee\}$. This enables to use properties of "Hecke paths", which are more or less the images of segments in the masure by retractions, to prove these theorems. These paths were first defined by Misha Kapovich and John J.Millson in \cite{MR2415306}. 

Actually, when we will write Theorem~\ref{thm inclusion} and Theorem~\ref{thm égalité des ensembles bis} using the masure, we will show that these inclusion or equality are true modulo $K$. But as if $X\subset G$ is invariant by left  multiplication by $K$, $X=\bigcup_{x\in K\backslash X} Kx$, this will be sufficient.

The framework of masures is a bit more general than that of split Kac-Moody groups over local non-archimedean field. For example, we might also apply this to almost split Kac-Moody groups over local non-archimedean field (see \cite{rousseau2017almost}). 

In the sequel, the results are not stated as in this introduction. Their statements use retractions and vectorial distance. They are also a bit more general: they take into account the inessential part of the standard apartment $\A$, which will be defined later.  Corollary~\ref{cor finitude des boules}, as it is stated in this introduction was proved in Section 5 of \cite{gaussent2014spherical} and it is slightly generalized in the following. The lattice $\Lambda^\vee$ will be denoted by $Q^\vee$.

\medskip

In section~\ref{sect general frameworks} we set the general frameworks, we set the notation, we define masures and we prove that $Y^{++}=Y\cap \overline{C^v_f}$ is a finitely generated monoid, which will be useful to prove Theorem~\ref{thm égalité des ensembles bis}.

 In section~\ref{sect preliminaries} we first define two applications: $T_\nu:\I\rightarrow\R_+$ and $y_\nu:\I\rightarrow \A$, for a fixed $\nu\in C_f^v$. For $x\in\I$, $T_\nu(x)$ measures the distance between the point $x$ and the apartment $\A$ along $\R_+\nu$ and $y_\nu(x)$ defines the projection of $x$ on $\A$ along $\R_+\nu$. We also determine the antecedents of some kinds of paths by $\rho_{-\infty}$. 
 
 In section~\ref{sect bounding of T}, we show that $T_\nu$ is bounded by some function of $\rho_{+\infty}-\rho_{-\infty}$ and we deduce Theorem~\ref{thm inclusion}. 
 
 In section~\ref{sect translations}, we study some kinds of translations of $\I$, which enables us to show Corollary~\ref{cor finitude des boules} and Theorem~\ref{thm invariance des cardinaux}.

 In section~\ref{sect proof of final theorem}, we use the tools of the preceding sections to show Theorem~\ref{thm égalité des ensembles bis}.

\paragraph{Acknowledgements}
I thank Stéphane Gaussent for suggesting me this problem and for helping me improving the manuscript. I also thank Guy Rousseau for his remarks on a previous version of this paper.

\paragraph{Funding}
This work was supported by the ANR grants ANR-15-CE40-0012.

\section{General frameworks}\label{sect general frameworks}
\subsection{Root generating system}
A Kac-Moody matrix (or generalized Cartan matrix) is a square matrix $C=(c_{i,j})_{i,j\in I}$ with integers coefficients, indexed by  a finite set $I$ and satisfying: 
\begin{enumerate}
\item $\forall i\in I,\ c_{i,i}=2$

\item $\forall (i,j)\in I^2|i \neq j,\ c_{i,j}\leq 0$

\item $\forall (i,j)\in I^2,\ c_{i,j}=0 \Leftrightarrow c_{j,i}=0$.
\end{enumerate}

A root generating system is a $5$-tuple $\mathcal{S}=(C,X,Y,(\alpha_i)_{i\in I},(\alpha_i^\vee)_{i\in I})$ made of a Kac-Moody matrix $C$ indexed by $I$, of two dual free $\Z$-modules $X$ (of characters) and $Y$ (of cocharacters) of finite rank $\mathrm{rk}(X)$, a family $(\alpha_i)_{i\in I}$ (of simple roots) in $X$ and a family $(\alpha_i^\vee)_{i\in I}$ (of simple coroots) in $Y$. They have to satisfy the following compatibility condition: $c_{i,j}=\alpha_j(\alpha_i^\vee)$ for all $i,j\in I$. We also suppose that the family $(\alpha_i)_{i\in I}$ is free in $X$ and that the family $(\alpha_i^\vee)_{i\in I}$ is free in $Y$.

 We now fix a Kac-Moody matrix $C$ and a root generating system with matrix $C$.

Let $V=Y\otimes \R$. Every element of $X$ induces a linear form on $V$. We will consider $X$ as a subset of the dual $V^*$ of $V$: the $\alpha_i$, $i\in I$ are viewed as linear form on $V$. For $i\in I$, we define an involution $r_i$ of $V$ by $r_i(v)=v-\alpha_i(v)\alpha_i^\vee$ for all $v\in V$. Its space of fixed points is $\ker \alpha_i$. The subgroup of $\mathrm{GL}(V)$ generated by the $\alpha_i$ for $i\in I$ is denoted by $W^v$ and is called the Weyl group of $\mathcal S$.

For $x\in V$ we let $\underline{\alpha}(x)=(\alpha_i(x))_{i\in I}\in \R^I$.

Let  $Q^\vee=\bigoplus_{i\in I}\Z\alpha_i^\vee$ and $P^\vee=\{v\in V |\underline{\alpha}(v)\in \Z^I\}$. We call  $Q^\vee$ the \textit{coroot-lattice} and $P^\vee$ the \textit{coweight-lattice} (but if $\bigcap_{i\in I}\ker \alpha_i\neq \{0\}$, this is not a lattice). Let $Q_+^\vee=\bigoplus_{i\in I}\N \alpha_i^\vee$, $Q^\vee_-=-Q^\vee_+$ and $Q^\vee_{\mathbb{R}}=\bigoplus_{i\in I}\R\alpha_i^\vee$. This enables us to define a preorder $\leq_{Q^\vee}$ on $V$ by the following way: for all $x,y\in V$, one writes $x\leq_{Q^\vee}y$ if $y-x\in Q^\vee_+$.

One defines an action of the group $W^v$ on $V^*$ by the following way: if $x\in V$, $w\in W^v$ and $\alpha\in V^*$ then $(w.\alpha)(x)=\alpha(w^{-1}.x)$. Let $\Phi=\{w.\alpha_i|(w,i)\in W^v\times I\}$, $\Phi$ is the set of \textit{real roots}. Then $\Phi\subset Q$, where $Q=\bigoplus_{i\in I}\Z\alpha_i$. Let $W^a=Q^\vee\rtimes W^v\subset\mathrm{GA}(V\mathrm{})$ the \textit{affine Weyl group} of $\mathcal{S}$, where $\mathrm{GA}(V\mathrm{})$ is the group of affine isomorphisms of $V$.

Let $V_{in}=\bigcap_{i\in I}\ker \alpha_i$. Then one has $Y+V_{in}\subset P^\vee$.

\begin{rque}\label{rque changement de syst de racines}
Let $C=(c_{i,j})_{i,j\in I}$ a Kac-Moody matrix. Then there exists a generating root system $(C,X,Y,(\alpha_i)_{i\in I},(\alpha_i^\vee)_{i\in I})$ such that $Y+V_{in}=P^\vee$. For this, suppose $I=\llbracket 1,k\rrbracket$ for some $k\in \N$. Let $n=2k$ and let $(\alpha_i)_{i\in \llbracket 1,n\rrbracket }$ be the canonical basis of the dual of $\Z^n$. Let $(e_i)_{i\in \llbracket 1,n\rrbracket }$ be the canonical basis of $\Z^n$. For $i\in \llbracket 1,k\rrbracket$, let $\alpha_i^\vee=\sum_{j=1}^k c_{i,j}e_j+e_{i
+k}$. Let $X=\bigoplus_{i\in \llbracket 1,n\rrbracket }\Z\alpha_i=(\Z^n)^*$ and $Y=\Z^n$. Then $(C,X,Y,(\alpha_i)_{i\in I},(\alpha_i^\vee)_{i\in I})$ is a generating root system which is more or less $\mathcal{D}_{un}^C$ of 7.1.2 of \cite{remy2002groupes}.

Let $u\in P^\vee$. Then there exists $y\in Y$ such that $(\alpha_i(y))_{i\in I}=(\alpha_i(u))_{i\in I}$ and thus $u\in Y+V_{in}$. Therefore, $P^\vee=Y+V_{in}$.

\end{rque}

\subsection{Description of $Y^{++}$}\label{sect Y++}

In this subsection we show that $Y^{++}=Y\cap \overline{C^v_f}$ is a finitely generated monoid, which will be useful to prove Theorem~\ref{thm égalité des ensembles bis}.

Let $l\in \Ne$. Let us define a binary relation $\prec$ on $\N^l$. Let $x,y\in \N^l$,  and $x=(x_1,\ldots,x_l)$, $\ y=(y_1,\ldots,y_l)$, then one says $x\prec y$ if $x\neq y$ 
and for all $i\in \llbracket 1,l\rrbracket$, $x_i\leq y_i$.
\begin{lemma}\label{lemme ensembles d'incomparables}
Let $l\in \Ne$ and $F$ be a subset of $\N^l$ satisfying property (INC($l$)): 

for all $x,y\in F$, $x$ and $y$ are not comparable for $\prec$. 

 Then $F$ is finite.
\end{lemma}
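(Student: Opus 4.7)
The plan is to prove this by induction on $l$, which is the classical Dickson-lemma argument: $(\N^l, \prec)$ is a well-quasi-order, so every antichain is finite.

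For the base case $l=1$, any two distinct elements of $\N$ are comparable under the usual ordering, hence under $\prec$, so a subset of $\N$ satisfying INC($1$) has at most one element.

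For the inductive step, assume the result for $l-1 \geq 1$, and suppose for contradiction that $F \subset \N^l$ is infinite and satisfies INC($l$). Pick any $x^{(0)} = (x^{(0)}_1, \ldots, x^{(0)}_l) \in F$. For every other $y \in F$, incomparability with $x^{(0)}$ forces the existence of an index $i(y) \in \llbracket 1,l\rrbracket$ such that $y_{i(y)} < x^{(0)}_{i(y)}$; in particular $y_{i(y)}$ lies in the finite set $\llbracket 0, x^{(0)}_{i(y)}-1\rrbracket$. Since $F\setminus\{x^{(0)}\}$ is still infinite and there are only finitely many pairs $(i,v)$ with $i\in \llbracket 1,l\rrbracket$ and $v < x^{(0)}_i$, the pigeonhole principle yields an index $i_0$ and a value $v_0 < x^{(0)}_{i_0}$ such that
\[
F' = \{y \in F \mid y_{i_0} = v_0\}
\]
is infinite. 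Let $\pi : \N^l \to \N^{l-1}$ be the projection that forgets the $i_0$-th coordinate. Since all elements of $F'$ share the same $i_0$-th coordinate, for distinct $y, z \in F'$ we have $y \prec z$ in $\N^l$ if and only if $\pi(y) \prec \pi(z)$ in $\N^{l-1}$. Hence $\pi(F')$ still satisfies INC($l-1$), and $\pi_{|F'}$ is injective, so $\pi(F')$ is infinite — contradicting the induction hypothesis.

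There is no real obstacle here: the only delicate point is ensuring that the projection $\pi_{|F'}$ preserves both injectivity and the incomparability property, which is why one first restricts to a fiber of a single coordinate before projecting. An alternative one-line proof would be to quote Dickson's lemma directly, but the inductive argument above is elementary and self-contained.
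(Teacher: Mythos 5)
Your proof is correct and follows essentially the same route as the paper: both arguments induct on $l$, use a fixed element of $F$ to bound some coordinate of all the others, extract (by pigeonhole in your case, by subsequence extraction in the paper's) an infinite subset on which one coordinate is constant, and then project that coordinate away to contradict the induction hypothesis for $l-1$. The only difference is cosmetic — the paper works with the minimum coordinate bounded by $\max\lambda_0$, while you pigeonhole directly on the pair (index, value) witnessing incomparability — so no further comparison is needed.
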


\begin{proof} This  is clear for $l=1$ because a set $F$ satisfying INC($1$) is a singleton or $\emptyset$. 

Suppose that $l>1$ and that we have proved that all set satisfying INC($l-1$) is finite.

Let $F$ be a set  satisfying INC($l$) and suppose $F$ infinite. Let $(\lambda_n)_{n\in \N}$ be an injective sequence of $F$. One writes $(\lambda_n)=(\lambda_n^1,\ldots,\lambda_n^l)$. Let $M=\max \lambda_{0}$ and, for any $n\in \N$, $m_n=\min(\lambda_n)$. Then for all $n\in \N$, $m_n\leq M$ (if $m_n>M$, we would have $\lambda_{0}\prec \lambda_n$). Maybe extracting a sequence of $\lambda$, one can suppose that $(m_n)=(\lambda_n^i)$ for some $i\in \llbracket 1, l\rrbracket$ and that $(m_n)$ is constant and equal to $m_0\in \llbracket 0,M\rrbracket$. For $x\in \N^l$, we define $\tilde{x}=(x_j)_{j\in \llbracket 1,l\rrbracket\backslash \{i\}}\in \N^{l-1}$. 
 
 Set $\tilde{F}=\{\tilde{\lambda_n}|n\in\N\}$. The set $\tilde{F}$ satisfies INC($l-1$). By the induction hypothesis, $\tilde{F}$ is finite and thus $F$ is finite, which is absurd. Hence $F$ is finite and the lemma is proved.  
 \end{proof}

\begin{lemma}\label{lemme description de Y^{++}}
There exists a finite set $E\subset Y$ such that $Y^{++}=\sum_{e\in E}\N e$.
\end{lemma}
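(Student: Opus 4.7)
The plan is to reduce the statement, via the map $\underline{\alpha}\colon Y^{++} \to \N^I$, to a problem about a submonoid of $\N^I$ (which Lemma~\ref{lemme ensembles d'incomparables} settles), then to lift generators back to $Y^{++}$ and adjoin a finite generating set for the kernel $Y \cap V_{in}$.

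First I would observe that $\underline{\alpha}(Y^{++}) = \underline{\alpha}(Y) \cap \N^I$: indeed, any $y \in Y$ with $\alpha_i(y) \geq 0$ for all $i$ already lies in $Y^{++}$ by definition. Call this set $L^+$; it is a submonoid of $\N^I$, and it enjoys the following \emph{positive-subtraction} property: if $x, y \in L^+$ and $x - y \in \N^I$, then $x - y \in L^+$, because $\underline{\alpha}(Y)$ is a subgroup of $\Z^I$. Let $F$ denote the set of $\prec$-minimal elements of $L^+ \setminus \{0\}$. The elements of $F$ are pairwise $\prec$-incomparable, so Lemma~\ref{lemme ensembles d'incomparables} implies that $F$ is finite. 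A straightforward induction on $\sum_i x_i$ then gives $L^+ = \sum_{f \in F} \N f$: any $x \in L^+ \setminus \{0\}$ either lies in $F$, or dominates some $f \in F$ with $f \neq x$ (pick a $\prec$-minimal element of the finite set $\{y \in L^+ \setminus \{0\} : y \preceq x\}$), in which case $x - f$ lies in $L^+$ by the positive-subtraction property and has strictly smaller coordinate sum, to which the induction hypothesis applies.

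Next I would lift these generators back to $Y^{++}$: choose a preimage $e_f \in Y^{++}$ of each $f \in F$, and set $E' = \{e_f : f \in F\}$. For any $y \in Y^{++}$, writing $\underline{\alpha}(y) = \sum n_f f$ as above, the element $z := y - \sum n_f e_f$ lies in $Y$ with $\underline{\alpha}(z) = 0$, i.e.\ $z \in Y \cap V_{in}$.

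It remains to express $Y \cap V_{in}$ as a finitely generated monoid sitting inside $Y^{++}$. Since $V_{in} \subset \overline{C^v_f}$, both $Y \cap V_{in}$ and its opposite lie in $Y^{++}$. The group $Y \cap V_{in}$ is a subgroup of the free $\Z$-module $Y$, hence free of finite rank; picking a $\Z$-basis $b_1,\ldots,b_m$, any $z \in Y \cap V_{in}$ decomposes as $\sum a_j^+ b_j + \sum a_j^- (-b_j)$ with $a_j^\pm \in \N$. The finite set $E = E' \cup \{b_1, -b_1, \ldots, b_m, -b_m\}$ is contained in $Y^{++}$ and satisfies $Y^{++} = \sum_{e \in E} \N e$. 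The only substantive input is Lemma~\ref{lemme ensembles d'incomparables}; the rest is routine linear algebra combined with the observation that $Y \cap V_{in}$ contributes both signs to $Y^{++}$.
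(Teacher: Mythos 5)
Your proof is correct and follows essentially the same route as the paper's: both split off $Y\cap V_{in}$ as a finitely generated group (hence a finitely generated monoid via $\pm$ a $\Z$-basis) and apply Lemma~\ref{lemme ensembles d'incomparables} to a pairwise $\prec$-incomparable set to get finitely many remaining generators. The only cosmetic difference is that you work with the $\prec$-minimal elements of the image $\underline{\alpha}(Y^{++})\subset\N^I$ and lift, whereas the paper works with the additively irreducible elements of $Y^{++}\setminus V_{in}$ and pushes forward; by the positive-subtraction property these two sets correspond exactly, and your version spells out the terminating induction that the paper leaves implicit.
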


\begin{proof} The set $Y_{in}=Y\cap V_{in}$ is a lattice in the vectorial space it spans. Consequently, it is a finitely generated $\Z$-module and thus a finitely generated monoid. Let $E_1$ be a finite set generating $Y_{in}$ as a monoid.

 Let $Y_{\succ 0}=Y^{++}\backslash Y_{in}$. Let $\mathcal{P}=\{a\in Y_{\succ 0}|a\neq b+c\ \forall b,c\in Y_{\succ 0}\}$. Let $\alpha:Y^{++}\rightarrow \N^{I}$ such that 
 $\alpha(x)=(\alpha_i(x))_{i\in I}$ for all $x\in Y^{++}$. Let $a,b\in \mathcal{P}$.
  If $\alpha(a)\prec \alpha(b)$, then $b=b-a+a$, with $a,b-a\in Y_{\succ 0}$, which is absurd and by symmetry we deduce that $\alpha(a)$ and $\alpha(b)$ are not comparable for $\prec$. 
Therefore, by lemme~\ref{lemme ensembles d'incomparables}, $\alpha(\mathcal{P})$ is finite. Let $E_2$ be a finite set of $Y_{\succ 0}$ such that $\alpha(\mathcal{P})=\{\alpha(x)|x\in E_2\}$. Then $Y^{++}=\sum_{e\in E_2}\N e+Y_{in}=\sum_{e\in E}\N e$, where $E=E_1\cup E_2$.  
\end{proof}

\subsection{Vectorial faces}

Define $C_f^v=\{v\in V|\  \alpha_i(v)>0,\ \forall i\in I\}$. We call it the \textit{fundamental chamber}. For $J\subset I$, one sets $F^v(J)=\{v\in V|\ \alpha_i(v)=0\ \forall i\in J,\alpha_i(v)>0\ \forall i\in J\backslash I\}$. Then the closure $\overline{C_f^v}$ of $C_f^v$ is the union of the $F^v(J)$ for $J\subset I$. The \textit{positive} (resp. \textit{negative}) \textit{vectorial faces} are the sets $w.F^v(J)$ (resp. $-w.F^v(J)$) for $w\in W^v$  and $J\subset I$. A \textit{vectorial face} is either a positive vectorial face or a negative vectorial face. We call \textit{positive} (resp. \textit{negative}) \textit{chamber}  every cone  of the shape $w.C_f^v$ for some $w\in W^v$ (resp. $-w.C_f^v$).  For all $x\in C_f^v$ and for all $w\in W^v$, $w.x=x$ implies that $w=1$. In particular the action of $W^v$ on the positive chambers is simply transitive. The \textit{Tits cone} $\mathcal T$ is defined by $\mathcal{T}=\bigcup_{w\in W^v} w.\overline{C^v_f}$. We also consider the negative cone $-\mathcal{T}$.
We define a $W^v$ invariant preorder $\leq$ on $V$ by: $\forall (x,y)\in V\mathrm{}^2$, $x\leq y\ \Leftrightarrow\ y-x\in \mathcal{T}$.

\subsection{Filters}

\begin{definition}
A filter in a set $E$ is a nonempty set $F$ of nonempty subsets of $E$ such that, for all subsets $S$, $S'$ of $E$,  if $S$, $S'\in F$ then $S\cap S'\in F$ and, if $S'\subset S$, with $S'\in F$ then $S\in F$.
\end{definition}

If $F$ is a filter in a set $E$, and $E'$ is a subset of $E$, one says that $F$ contains $E'$ if every element of $F$ contains $E'$. If $E'$ is nonempty, the set $F_{E'}$ of subsets of $E$ containing $E'$ is a filter. By abuse of language, we will sometimes say that $E'$ is a filter by identifying $F_{E'}$ and $E'$. If $F$ is a filter in $E$, its closure $\overline F$ (resp. its convex envelope) is the filter of subsets of $E$ containing the closure (resp. the convex envelope) of some element of $F$. A filter $F$ is said to be contained in an other filter $F'$: $F\subset F'$ (resp. in a subset $Z$ in $E$: $F\subset Z$) if and only if any set in $F'$ (resp. if $Z$) is in $F$.

If $x\in V\mathrm{}$ and $\Omega$ is a subset of $V$ containing $x$ in its closure, then the \textit{germ} of $\Omega$ in $x$ is the filter $germ_x(\Omega)$ of subsets of $V$ containing a neighbourhood in $\Omega$ of $x$.

A \textit{sector} in $V$ is a set of the shape $\mathfrak{s}=x+C^v$ with $C^v=\pm w.C_f^v$ for some $x\in \mathbb{A}$ and $w\in W^v$. The point $x$ is its \textit{base point} and $C^v$ is its \textit{direction}. The intersection of two sectors of the same direction contains a sector of the same direction.

A \textit{sector-germ} of a sector $\mathfrak{s}=x+C^v$ is the filter $\mathfrak{S}$ of subsets of $V$ containing a $V$-translate of $\mathfrak{s}$. It only depends on the direction $C^v$. We denote by $+\infty$ (resp. $-\infty$) the sector-germ of $C_f^v$ (resp. of $-C_f^v$).

A ray $\delta$ with base point $x$ and containing $y\neq x$ (or the interval $]x,y]=[x,y]\backslash\{x\}$ or $[x,y]$) is called \textit{preordered} if $x\leq y$ or $y\leq x$ and \textit{generic} if $y-x\in \pm\mathring \T$, the interior of $\pm \T$.

In the next subsection, we define the notions of faces, enclosures and chimneys defined in \cite{rousseau2011masures} 1.7 and 1.10 and in  \cite{gaussent2014spherical} 1.4. For a first reading, one can just know the following facts about these objects and skip this subsection:

\begin{enumerate}

\item To any filter $F$ of $V$ is associated its enclosure $\mathrm{cl}_{\mathbb{A}}(F)$ which is a filter in $\mathbb{A}$ containing the convex envelope of the closure of $F$.

\item A face or a chimney is a filter in $V$.

\item A sector is a chimney which is solid and splayed.

\item If a chimney is a sector, its germ as a chimney coincides with its germ as a sector.

\item Every $x\in V\mathrm{}$ is in some face of $V$.

\item The group $W^a$ permutes the sectors, the enclosures, the faces and the chimneys of $V$.\label{fait sur les faces}

\end{enumerate}

\subsection{Definitions of enclosures, faces, chimneys and related notions}

 Let $\Delta=\Phi\cup\Delta_{im}^+\cup\Delta_{im}^-\subset Q$ be the set of all roots (recall that $Q=\bigoplus_{i\in I}\Z\alpha_i$) defined in  \cite{kac1994infinite}. The group $W^v$ stabilizes $\Delta$. For $\alpha\in \Delta$, and $k\in \Z\cup{+\infty}$, let $D(\alpha,k)=\{v\in V| \alpha(v)+k\geq 0\}$ (and $D(\alpha,+\infty)=V\mathrm{}$ for all $\alpha\in \Delta$) and $D^\circ(\alpha,k)=\{v\in V| \alpha(v)+k > 0\}$ (for $\alpha\in \Delta$ and $k\in \Z\cup\{+\infty\}$).

Given a filter $F$ of subsets of $V$, its \textit{enclosure} $\mathrm{cl}_V\mathrm{}(F)$ is the filter made of the subsets of $V$ containing an element of $F$ of the shape $\bigcap_{\alpha\in \Delta}D(\alpha,k_\alpha)$ where $k_\alpha\in \Z\cup\{+\infty\}$ for all $\alpha\in \Delta$.

A \textit{face} $F$ in $V$ is a filter associated to a point $x\in V\mathrm{}$ and a vectorial face $F^v\subset V$. More precisely, a subset $S$ of $V$ is an element of the face $F=F(x,F^v)$ if and only if, it contains an intersection of half-spaces $D(\alpha,k_\alpha)$ or open half-spaces $D^\circ(\alpha,k_\alpha)$, with $k_\alpha\in \Z$ for all $\alpha\in \Delta$, that contains $\Omega\cap (x+F^v)$, where $\Omega$ is an open neighbourhood of $x$ in $V$.

There is an order on the faces: if $F\subset \overline{F'}$ we say that "$F$ is a face of $F'$" or "$F'$ dominates $F$". The dimension of a face $F$ is the smallest dimension of an affine space generated by some $S\in F$. Such an affine space is unique and is called its support. A face is said to be \textit{spherical} if the direction of its support meets the open Tits cone $\mathring \T$ or its opposite $-\mathring \T$; then its pointwise stabilizer $W_F$ in $W^v$ is finite.

We have $W^a\subset P^\vee \rtimes W^v$. As $\alpha(P^\vee)\subset \Z$ for all $\alpha$ in $\bigoplus_{i\in I}\Z \alpha_i$, if $\tau$ is a translation of $V$ of a vector $p\in P^\vee$, then for all $\alpha\in Q$, $\tau$ permutes the sets of the shape $D(\alpha,k)$ where $k$ runs over $\Z$. As $W^v$ stabilizes $\Delta$, any element of $W^v$ permutes the sets of the shape $D(\alpha,k)$ where $\alpha$ runs over $\Delta$. Therefore, $W^a$ permutes the sets $D(\alpha,k)$, where $(\alpha,k)$ runs over $\Delta\times \Z$ and thus $W^a$ permutes the enclosures, faces, chimneys, ... of $V$.

A \textit{chamber} (or alcove) is a maximal face, or equivalently, a face such that all its elements contains a nonempty open subset of $V$.

A \textit{panel} is a spherical face maximal among faces that are not chambers or, equivalently, a spherical face of dimension $n-1$.

A \textit{chimney} in $V$ is associated to a face $F=F(x,F_0^v)$ and to a vectorial face $F^v$; it is the filter $\mathfrak{r}(F,F^v)=\mathrm{cl}_\mathbb{A}(F+F^v)$. The face $F$ is the basis of the chimney and the vectorial face $F^v$ its direction. A chimney is \textit{splayed} if $F^v$ is spherical, and is \textit{solid} if its support (as a filter, i.e., the smallest affine subspace of $V$  containing $\mathfrak{r}$) has a finite pointwise stabilizer in $W^v$. A splayed chimney is therefore solid. 

A \textit{shortening} of a chimney $\mathfrak{r}(F,F^v)$, with $F=F(x,F_0^v)$ is a chimney of the shape $\mathfrak{r} (F(x+\xi,F_0^v),F^v)$ for some $\xi\in \overline{F^v}$. The \textit{germ} of a chimney $\mathfrak{r}$ is the filter of subsets of $V$ containing a shortening of $\mathfrak{r}$ (this definition of shortening is slightly different from the one of \cite{rousseau2011masures} 1.12 but follows \cite{rousseau2017almost} 3.6) and we obtain the same germs with these two definitions).

\subsection{Masure}
We now denote by $\A$ the affine space $V$ equipped with its faces, chimneys, ...

An apartment of type $\A$ is a set $A$ with a nonempty set $\mathrm{Isom}(\A,A)$ of bijections (called isomorphisms) such that if $f_0\in \mathrm{Isom}(\A,A)$ then $f\in \mathrm{Isom}(\A,A)$ if and only if, there exists $w\in W^a$ satisfying $f=f_0\circ w$. An isomorphism between two apartments $\phi:A\rightarrow A'$ is a bijection such that ($f\in \mathrm{Isom}(\mathbb{A},A)$ if, and only if, $\phi \circ f\in \mathrm{Isom}(\A,A')$). We extend all the notions that are preserved by $W^a$ to each apartment. By the fact~\ref{fait sur les faces} of the above subsection, sectors, enclosures, faces and chimneys are well defined in any apartment of type $\A$.

\begin{definition}
An ordered affine masure of type $\A$ (also called a masure of type $\A$) is a set $\mathcal{I}$ endowed with a covering $\mathcal{A}$ of subsets called apartments such that: 

(MA1) Any $A\in \mathcal{A}$ admits a structure of an apartment of type $\A$.

(MA2) If $F$ is a point, a germ of a preordered interval, a generic ray or a solid chimney in an apartment $A$ and if $A'$ is another apartment containing $F$, then $A\cap A'$ contains the enclosure $\mathrm{cl}_A(F)$ of $F$ and there exists an isomorphism from $A$ onto $A'$ fixing $\mathrm{cl}_A(F)$.

(MA3) If $\mathfrak{R}$ is the germ of a splayed chimney and if $F$ is a face or a germ of a solid chimney, then there exists an apartment that contains $\mathfrak{R}$ and $F$.

(MA4) If two apartments $A$, $A'$ contain $\mathfrak{R}$ and $F$ as in (MA3), then there exists an isomorphism from $A$ to $A'$ fixing $\mathrm{cl}_A(\mathfrak{R}\cup F)$.

(MAO) If $x$, $y$ are two points contained in two apartments $A$ and $A'$, and if $x\leq_{A} y$ then the two segments $[x,y]_A$ and $[x,y]_{A'}$ are equal.
\end{definition}

In this definition, we say that an apartment contains a germ of a filter if it contains at least one element of this germ. We say that an application fixes a germ if it fixes at least one element of this germ.

\begin{rque}(consequence 2.2 3) of \cite{rousseau2011masures})\label{rque axioms MA3 et MA4 modifiés}
By (MA2), the axioms (MA3) and (MA4) also apply in a masure when $F$ is a point, a germ of a preodered segment and when $\mathfrak{R}$ or $F$ is a germ of a generic ray or a germ of a spherical sector face.
\end{rque}

\medskip
Until the end of this article, $\I$ will be a masure We suppose that $\I$ is thick of \textit{finite thickness}: the number of chambers (=alcoves) containing a given panel has to be finite, greater or equal to $3$. This assumption will be crucial to use some theorems of \cite{gaussent2014spherical} but we will not use it directly. 

We assume that $\I$ has a strongly transitive group of automorphisms $G$, which means that all isomorphisms involved in the above axioms are induced by elements of $G$. We choose in $\I$ a fundamental apartment, that we identify with $\A$. As $G$ is strongly transitive, the apartments of $\I$ are the sets $g.\A$ for $g\in G$. The stabilizer $N$ of $\A$ induces a group $\nu(N)$ of affine automorphisms of $\A$ and we suppose that $\nu(N)=W^v\ltimes Y$.

An example of such a masure $\I$ is the masure associated to a split Kac-Moody group over a ultrametric field constructed in \cite{gaussent2008kac} and in \cite{rousseau2017almost}. We will precise it in Subsection~\ref{SubsecMasure associée à un groupe de Kac-Moody}

\paragraph{Preorder and vectorial distance}
As the preorder $\leq$ on $\A$ (induced by the Tits cone) is invariant under the action of $W^v$, we can equip each apartment $A$ with a preorder $\leq_A$.  Let $A$ be an apartment of $\I$ and $x,y\in A$ such that $x\leq_A y$. Then by Proposition 5.4 of \cite{rousseau2011masures}, if $B$ is an apartment containing $x$ and $y$, $x\leq_B y$. As a consequence, one can define a relation $\leq$ on $\I$ as follow: let $x,y\in \I$, one says that $x\leq y$ if there exists an apartment $A$ of $\I$ containing $x$ and $y$ and such that $x\leq_A y$.  By Théorème 5.9 of \cite{rousseau2011masures}, this defines a $G$-invariant preorder on $\I$.

For $x\in \mathcal{T}$, we denote by $x^{++}$ the unique element in $\overline{C^v_f}$ conjugated by $W^v$ to $x$. 

 Let $\I\times_{\leq}\I=\{(x,y)\in \I^2|x\leq y\}$ be the set of increasing pairs in $\I$. Let $(x,y)$ be such a pair and $g\in G$ such that $g.x,g.y\in \A$. Then $g.x\leq g.y$ and we define the \textit{vectorial distance} $d^v(x,y)\in\overline{C_f^v}$ by $d^v(x,y)=(g.y-g.x)^{++}$. It does not depend on the choices we made. This "distance" is $G$-invariant.

For $x\in \I$ and $\lambda\in \overline{C_f^v}$, one defines $S^v(x,\lambda)=\{y\in \I|x\leq y\mathrm{\ and\ }d^v(x,y)=\lambda\}$.

\begin{rque}\label{rque caractérisation distance vectorielle}
a) If $a\in Y$ and $\lambda\in \overline{C_f^v}$, then $S^v(a,\lambda)=\{x\in \I|\exists g\in G|g.a=a\mathrm{\ and\ }g.x=a+\lambda\}$.

b) Let $x,y\in \I$ and suppose that for some $g\in G$, $g.y-g.x\in \overline{C_f^v}$. Then $x\leq y$ and $d^v(x,y)=g.y-g.x$.

\end{rque}

\subsection{Retractions and Hecke paths}

Let  $\mathfrak{R}$ be the germ of a splayed chimney of an apartment $A$. Let $x\in \I$. By (MA3), for all $x\in \I$, there exists an apartment $A_x$ of $\I$ containing $x$ and $\mathfrak{R}$. By (MA4), there exists an isomorphism of apartments $\phi:A_x\rightarrow A$ fixing $\mathfrak{R}$. By \cite{rousseau2011masures} 2.6, $\phi(x)$ does not depend on the choices we made and thus we can set $\rho_{A,\mathfrak{R}}(x)=\phi(x)$.

The application $\rho_{A,\mathfrak{R}}$ is a retraction from $\I$ onto $A$. It only depends on $\mathfrak{R}$ and $A$ and we call it the \textit{retraction onto $A$ centred at $\mathfrak{R}$}. 

We denote by $\rho_{+\infty}$ (resp. $\rho_{-\infty}$ ) the retraction onto $\A$ centred at $+\infty$ (resp. $-\infty$).

We now define Hecke paths. They are more or less the images by $\rho_{-\infty}$ of preordered segments $[x,y]$ in $\I$. The definition is a bit technical but it expresses the fact that the image of such a path "goes nearer to $+\infty$" when it crosses a wall. A consequence of that is Remark~\ref{rque chemins de Hecke} and we will not use directly this definition in the following.

 We consider piecewise linear continuous paths $\pi:[0,1]\rightarrow \A$ such that the values  of $\pi'$ belong to some orbit $W^v.\lambda$ for some $\lambda\in \overline{C_f^v}$. Such a path is called a $\lambda$-\textit{path}. It is increasing with respect to the preorder relation $\leq$ on $\A$. For any $t\neq 0$ (resp. $t\neq 1$), we let $\pi'_-(t)$ (resp. $\pi'_+(t))$ denote the derivative of $\pi$ at $t$ from the left (resp. from the right).

\begin{definition}
A Hecke path of shape $\lambda$ with respect to $-C_f^v$ is a $\lambda$-path such that
 $\pi'_+(t)\leq_{W^v_{\pi (t)}} \pi'_-(t)$ for all $t\in [0,1]\backslash \{0,1\}$, which 
 means that there exists a $W_{\pi(t)}^v$-chain from $\pi'_-(t)$ to $\pi'_{+}(t)$, i.e., a 
 finite sequence $(\xi_0=\pi'_-(t),\xi_1,\ldots, \xi_s=\pi'_+(t))$ of vectors in $V$ and
  $(\beta_1,\ldots,\beta_s)\in \Phi^s$ such that, for all $i\in \llbracket 1,s\rrbracket$,
\begin{enumerate}
\item $r_{\beta_i}(\xi_{i-1})=\xi_i.$

\item $\beta_i(\xi_{i-1})<0.$

\item $r_{\beta_i}\in W^v_{\pi(t)}$; i.e., $\beta_i(\pi(t))\in \Z$: $\pi(t)$ is in a wall of direction $\ker(\beta_i)$.

\item Each $\beta_i$ is positive with respect to $-C_f^v$; i.e., $\beta_i(C_f^v)>0$.
\end{enumerate}
\end{definition}

\begin{rque}\label{rque chemins de Hecke}
Let $\pi:[0,1]\rightarrow \A$ be a Hecke path of shape $\lambda\in \overline{C_f^v}$ with respect to $-C^v_f$. Then if $t\in [0,1]$ such that $\pi$ is differentiable in $t$ and $\pi'(t)\in \overline{C_f^v}$, then for all $s\geq t$, $\pi$ is differentiable in $s$ and $\pi'(s)=\lambda$.
\end{rque}

\subsection{Masure associated to a split Kac-Moody group}\label{SubsecMasure associée à un groupe de Kac-Moody}
 Let $\mathbf{G}$ be a split Kac-Moody group over a ultrametric field $\mathcal F$ with ring of integer $\mathcal O$ and  $G=\mathbf{G}(\mathcal{F})$.  We use notation of the introduction.  We associate a masure to $G$ as in  \cite{gaussent2008kac} and \cite{rousseau2017almost}.  Let us explain the dictionnary, given in the introduction,  between objects in $G$ and objects in the $\I$.

 The group $G$ acts strongly transitively on $\I$.  We have $Q=\Lambda$ and $Q^\vee=\Lambda^\vee$. The group $K$ is the fixer of $0$ in $G$ and $HU$ (resp. $HU^-$) is the fixer of $+\infty$ (resp. $-\infty$) in $G$, where $H=\mathbf{T}(\mathcal{O})$.  The action of $T$ on $\A$ is as follows: if $t=\pi^{\lambda^\vee}\in T$ for some $\lambda^\vee\in \Lambda^\vee$, $t$ acts on $\A$ by the translation of vector $-\lambda^\vee$ and thus $\lambda^\vee=\pi^{-\lambda^\vee}.0$ for all $\lambda^\vee\in \Lambda^\vee$.

Let $\I_0=G.0$ be
 the set of \textit{vertices of type} $0$. The  map $G\rightarrow \I_0$ sending $g\in G$ to $g.0$ induces a bijection $\phi: G/K \rightarrow \I_0$. If $g\in G$, then $\phi^{-1}(g.0)=gK$. If $x\in \A$, then  $\rho_{+\infty}^{-1}(\{x\})=U.x$ and $\rho_{-\infty}^{-1}(\{x\})=U^-.x$.  By Remark~\ref{rque caractérisation distance vectorielle}, if $\lambda^\vee\in \Lambda^\vee$, then $S^v(0,\lambda^\vee)=K.\lambda^\vee$.

Therefore, for all $\lambda^\vee,\mu^\vee\in \Lambda^{\vee}$, $\phi^{-1}\big(S^v(0,\lambda^\vee)\cap \rho_{+\infty}^{-1}(\{\mu^\vee\})\big)=(K\pi^{-\lambda^\vee}K\cap U\pi^{-\mu^\vee}K)/K$ and $\phi^{-1}\big(\rho_{-\infty}^{-1}(\{\lambda^\vee\})\cap \rho_{+\infty}^{-1}(\{\mu^\vee\})\big)=(U^-\pi^{-\lambda^\vee}K\cap U\pi^{-\mu^\vee}K)/K$.

We then use the bijection $\psi:K\backslash G \rightarrow G/K$ defined by $\psi(Kg)=g^{-1}K$ to obtain the sets considered in the introduction.

\section{Segments and Rays in $\I$}\label{sect preliminaries}

In this section we begin by defining for all $\nu\in C_f^v$ two applications $y_\nu:\I\rightarrow \A$ and $T_\nu:\I\rightarrow \R_+$, where for all $x\in \I$, $T_\nu(x)$ and $y_\nu(x)$ can be considered as the distance between $x$ and $\A$ along $\R_+\nu$ and the projection of $x$ on $\A$ along $\R_+\nu$. 

We also show that the only antecedent of some paths for $\rho_{-\infty}$ are themselves (this is Lemma~\ref{lemme image réciproque de segments}).

\medskip

Let $Q^\vee_{\R_+}=\bigoplus_{i\in I}\R_+\alpha_i^\vee$ and $Q^\vee_{\R_-}=-Q^\vee_{\R_+}$. Recall that we want to prove the following theorem: 

\begin{theorem}\label{thm inclusion}
Let $\mu\in \A$. Then if $\mu\notin Q_{\R_-}^\vee$, $\rho_{+\infty}^{-1}(\{\lambda+\mu\})\cap \rho_{-\infty}^{-1}(\{\lambda\})$ is empty for all $\lambda\in \A$. If $\mu\in Q^\vee_{\R_-
}$, then for $\lambda\in \A$ sufficiently dominant, $\rho_{+\infty}^{-1}(\{\lambda+\mu\})\cap \rho_{-\infty}^{-1}(\{\lambda\})\subset S^v(0,\lambda)\cap \rho_{+\infty}^{-1}(\{\lambda+\mu\}) $. 
\end{theorem}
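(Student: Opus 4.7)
The theorem splits into two parts, both reduced to the analysis of $T_\nu$ developed in Section~\ref{sect bounding of T}.

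\emph{Emptiness.} The first assertion follows from the statement that for every $x\in\I$ one has $\rho_{+\infty}(x)-\rho_{-\infty}(x)\in Q^\vee_{\R_-}$; the contrapositive is precisely that if $\mu\notin Q^\vee_{\R_-}$, no $x$ satisfies $\rho_{+\infty}(x)=\lambda+\mu$ and $\rho_{-\infty}(x)=\lambda$. To prove this inclusion, use (MA3) to place $x$ in an apartment $A$ together with the sector-germ $-\infty$, pick a generic preordered segment inside $A$ joining $x$ to a point $z$ deep in that sector (so that $\rho_{-\infty}$ acts as the identity on a neighbourhood of $z$), and retract this segment by $\rho_{+\infty}$. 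The image is a Hecke path connecting $\rho_{+\infty}(x)$ and $z$, while the straight segment $[\rho_{-\infty}(x),z]$ in $\A$ is the retraction of the same segment by $\rho_{-\infty}$. The positivity of the foldings (each contributing a nonnegative multiple of a positive real coroot) then forces $\rho_{-\infty}(x)-\rho_{+\infty}(x)\in Q^\vee_{\R_+}$.

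\emph{Inclusion.} Fix a generic $\nu\in C_f^v$ and let $f$ be the function of Section~\ref{sect bounding of T} satisfying $T_\nu(x)\leq f(\mu)$ whenever $\rho_{+\infty}(x)-\rho_{-\infty}(x)=\mu$. For $x$ in the intersection, set $y=y_\nu(x)\in\A$; by construction there is an apartment $A$ containing $x$ and $y$ in which $y=x+T_\nu(x)\nu$, and the compatibility of $\rho_{-\infty}$ with translations in direction $\nu\in C_f^v$ (exploited in Section~\ref{sect preliminaries}) yields $y=\lambda+T_\nu(x)\nu$ inside $\A$. Now choose $\lambda$ dominant enough that $\lambda$, $y$, and every point of $[0,y]$ lie deep in $\overline{C_f^v}$ and $[0,y]$ is generic; this is possible since $T_\nu(x)\leq f(\mu)$ is bounded uniformly in $\lambda$. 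Applying (MA2) to the preordered segment $[0,y]$ in $\A$, and then combining with $A$ via (MA3)--(MA4), one produces an apartment $A'$ containing $0$ and $x$ in which $x-0$ equals $\lambda\in\overline{C_f^v}$. Remark~\ref{rque caractérisation distance vectorielle}(b) then gives $d^v(0,x)=\lambda$, i.e.\ $x\in S^v(0,\lambda)$.

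\emph{Main obstacle.} The hard part is the second assertion, and within it the construction of the apartment $A'$. The bound on $T_\nu$ localises $x$ close to $\A$, but to upgrade ``close to $\A$'' into membership in $S^v(0,\lambda)$ one must produce a single apartment containing both $0$ and $x$ where the relative position of $x$ is exactly $\lambda$. Quantifying ``sufficiently dominant'' precisely in terms of $f(\mu)$ and $\nu$, and chaining the masure axioms (MA2)--(MA4) in the right order to transport structure from $A$ through $\A$ to $A'$, is the technical core of the argument.
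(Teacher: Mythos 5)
Your first part (emptiness) is essentially the paper's argument: both reduce to showing $\rho_{-\infty}(x)-\rho_{+\infty}(x)\in Q^\vee_{\R_+}$ for all $x\in\I$ via the positivity of the foldings of a Hecke path (this is Corollary~\ref{corollaire majoration de T}, itself resting on Lemma 2.4 of Gaussent--Rousseau); whether one retracts a segment pointing towards $+\infty$ by $\rho_{-\infty}$ or one pointing towards $-\infty$ by $\rho_{+\infty}$ is immaterial. That part is fine.

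The inclusion part has a genuine gap, and you have correctly located it yourself (``the construction of the apartment $A'$''), but your setup makes it unrepairable as written. You work with $y_\nu$ and $T_\nu$, the projection of $x$ onto $\A$ \emph{in the direction $+\nu$}. First, a computational slip: by definition $y_\nu(x)=\rho_{+\infty}(x)+T_\nu(x)\nu=\lambda+\mu+T_\nu(x)\nu$, not $\lambda+T_\nu(x)\nu$. More seriously, the apartment $A$ that comes for free with $y_\nu(x)$ is one containing $x$ and the germ $+\infty$; its intersection with $\A$ contains $y_\nu(x)+\overline{C_f^v}$, which does \emph{not} contain $0$ when $\lambda$ is dominant, and the isomorphism $A\to\A$ fixing $\mathrm{cl}(y_\nu(x),+\infty)$ sends $x$ to $\rho_{+\infty}(x)=\lambda+\mu$, not to $\lambda$. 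So neither the containment of $0$ in a common apartment with $x$ nor the identity $d^v(0,x)=\lambda$ follows from your data, and no chaining of (MA2)--(MA4) starting from the $+\infty$ side will produce them. The paper's resolution is to use the opposite projection $y^-_\nu$ and $T^-_\nu$ (bounded by Remark~\ref{rque majoration de T^-}): one arranges $\lambda$ dominant enough that $y^-_\nu(x)\in C_f^v$, and then (Lemma~\ref{lemme_rétraction et distance vectorielle}) the apartment containing $x$ and $-\infty$ automatically contains $\mathrm{cl}(y^-_\nu(x),-\infty)\supset y^-_\nu(x)-\overline{C_f^v}\ni 0$, and the isomorphism fixing this enclosure sends $x$ to $\rho_{-\infty}(x)=\lambda\in C_f^v$, so Remark~\ref{rque caractérisation distance vectorielle}~b) gives $d^v(0,x)=\lambda$ at once. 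To complete your proof you must switch to the $-\infty$-side projection; the rest of your outline (uniform bound on the distance to $\A$, then take $\lambda$ dominant enough that the projected points land in $C_f^v$) then matches the paper.
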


This theorem is a bit more general than its statement in the introduction because $\lambda,\mu\in \A$ are arbitrary and not only in $Q^\vee$. It will be proved in Section~\ref{sect bounding of T}.

\medskip
Let us recall briefly the notion of parallelism in $\I$. This is done more completely in \cite{rousseau2011masures} Section 3. Let $\delta$ and $\delta'$ be two generic rays in $\I$. Then there exists a splayed chimney $R$ containing $\delta$ and a solid chimney $F$ containing $\delta'$. By (MA3) there exists an apartment $A$ containing the germ $\mathfrak{R}$ of $R$ and $F$. Therefore $A$ contains translates of $\delta$ and $\delta'$ and we say that $\delta$ and $\delta'$ are \textit{parallel}, if these translates are parallel in $A$. Parallelism is an equivalence relation and its equivalence classes are called \textit{directions}.

\begin{lemma}\label{lemme demi-droite de base donnée}
Let $x\in \I$ and $\delta$ be a generic ray. Then there exists a unique ray $x+\delta$ in $\I$ with base point $x$ and direction $\delta$. In any apartment $A$ containing $x$ and a ray $\delta'$ parallel to $\delta$, this ray is the translate in $A$ of $\delta'$ having $x$ as a base point.
\end{lemma}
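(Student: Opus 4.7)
\textbf{Existence.} I apply axiom (MA3), extended via Remark~\ref{rque axioms MA3 et MA4 modifiés}, with $\mathfrak R$ the germ at infinity of the generic ray $\delta$ and $F=\{x\}$. This produces an apartment $A\subset\I$ containing $x$ together with a sub-ray $\delta_0$ of $\delta$ (since $A$ must contain some element of the filter $\mathfrak R$, whose minimal elements are sub-rays of $\delta$). Using the affine structure of $A$ I translate $\delta_0$ so that its base point becomes $x$: the resulting ray $r\subset A\subset\I$ has base point $x$ and is parallel in $A$ to $\delta_0$, hence parallel to $\delta$ in $\I$. This provides a ray with the required properties.

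\textbf{Uniqueness and explicit description.} Let $A'$ be \emph{any} apartment of $\I$ containing $x$ together with a ray $\delta'\parallel\delta$, and let $r'$ be the translate of $\delta'$ in $A'$ with base point $x$. I claim $r=r'$ as subsets of $\I$: applied with $A'$ an apartment of a hypothetical second ray from $x$ in direction $\delta$ this gives uniqueness, and in full generality it gives the explicit description at the end of the statement. Since $\delta'\parallel\delta$, the apartment $A'$ contains a sub-ray of $\delta'$, which represents the same germ at infinity as $\delta$; thus both $A$ and $A'$ contain $x$ and the germ $\mathfrak R$. By (MA4), extended via Remark~\ref{rque axioms MA3 et MA4 modifiés}, there exists an apartment isomorphism $\phi\colon A\to A'$ fixing pointwise some element $E\in\mathrm{cl}_A(\{x\}\cup\mathfrak R)$. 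The set $E=\bigcap_\alpha D(\alpha,k_\alpha)$ already contains $x$ and a whole sub-ray of $\delta$ of direction vector $v$; a direct half-space check (for each $\alpha$ indexing $E$ one has $\alpha(v)\geq 0$, else the sub-ray would leave $D(\alpha,k_\alpha)$, and then $\alpha(x+sv)+k_\alpha\geq\alpha(x)+k_\alpha\geq 0$ for every $s\geq 0$) shows that $r\subset E$, so $\phi$ fixes $r$ pointwise. On the other hand, $\phi$ is an apartment isomorphism fixing $x$ and a common sub-ray $\delta_0$ of $\delta$ in $A\cap A'$, so it sends the translate of $\delta_0$ in $A$ with base $x$ (namely $r$) to the translate of $\delta_0$ in $A'$ with base $x$; this latter ray coincides with $r'$, since $\delta_0\parallel\delta'$ in $A'$. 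Combining $\phi(r)=r$ with $\phi(r)=r'$ yields $r=r'$.

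\textbf{Main obstacle.} The only step that is not a formal invocation of the axioms is the inclusion $r\subset E$, which is what converts the germ-theoretic statement of (MA4) into the pointwise equality one actually needs. This is precisely the place where the genericity of $\delta$ matters, through the sign of $\alpha(v)$ on the direction vector $v$, and the half-space monotonicity argument above is the crucial observation.
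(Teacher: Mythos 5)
Your existence step is fine, and the half-space computation showing that the translated ray $r$ lies inside a fixed element $E=\bigcap_\alpha D(\alpha,k_\alpha)$ of $\mathrm{cl}_A(\{x\}\cup\mathfrak R)$ is correct (it is essentially the "NB a) of Proposition 2.7" of \cite{rousseau2011masures} that the paper invokes). The gap is in the sentence "the apartment $A'$ contains a sub-ray of $\delta'$, which represents the same germ at infinity as $\delta$". The germ of a generic ray, as used in the extension of (MA3)/(MA4) given in Remark~\ref{rque axioms MA3 et MA4 modifiés}, is the filter of subsets of $\I$ containing a shortening of \emph{that particular ray}. Unlike a sector-germ, whose defining filter uses arbitrary $V$-translates of a full chamber and therefore depends only on the direction, the germ of a ray is not invariant under passing to a parallel ray: already inside a single apartment, two parallel rays $y_1+\R_+v$ and $y_2+\R_+v$ with $y_1-y_2\notin\R v$ are disjoint and have distinct germs. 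Hence from "$A'$ contains $x$ and some $\delta'\parallel\delta$" you cannot conclude that $A'$ contains the germ $\mathfrak R$ of $\delta$, and (MA4) cannot be applied to the pair $(A,A')$ with this $\mathfrak R$. The same problem occurs when you specialize $A'$ to an apartment carrying a hypothetical second ray from $x$, so uniqueness is not established either.

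What your argument does prove is the weaker statement that the ray based at $x$ is the same in any two apartments containing $x$ and a common shortening of the \emph{same} ray $\delta$. The missing content is precisely the passage from "parallel to $\delta$" to "contains the germ of $\delta$" (or some substitute for it), and this is the point the paper itself identifies as the difficult part: it gives no self-contained argument, but transposes the proof of Proposition 4.7 1) of \cite{rousseau2011masures}, which relies on the parallelism theory of Section 3 of that paper to compare apartments containing distinct but parallel rays. Your proposal silently treats ray germs as if they behaved like sector-germs, which is exactly the step that needs justification.
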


This lemma is analogous to Proposition 4.7 1) of \cite{rousseau2011masures}. The difficult part of this lemma is the uniqueness of such a ray because second part of the lemma yields a way to construct a ray having direction $\delta$ and $x$ as a base point. This uniqueness can be shown exactly in the same manner as the proof of Proposition 4.7.1) by replacing "spherical sector face" by "generic ray". This is possible by NB.a) of Proposition 2.7 and by 2.2 3) (or by Remark~\ref{rque axioms MA3 et MA4 modifiés} of this paper) of \cite{rousseau2011masures}.

\paragraph{Definition of $y_\nu$ and $T_\nu$ (resp. $y^-_\nu$ and $T^-_\nu$)}
Let $x\in \mathcal{I}$. Let $\nu \in C_f^v$ and $\delta=\R_+\nu$, which is a generic ray. According to axiom (MA3) applied to a face containing $x$ and the splayed chimney $C^v_f$, there exists an apartment $A$ containing $x$ and $+\infty$. Then $A$ contains $x+\delta$. The set $x+\delta\cap \A$ is nonempty. Let $z\in x+\delta\cap \A$. Then $A\cap \A$ contains $z$, $+\infty$ and by (MA4), $A\cap\A$ contains $\mathrm{cl}(z,+\infty)$. As $\mathrm{cl}(z,+\infty)\supset z+\overline{C_f^v}$,  $A\cap\A \supset z+\delta$ and thus $x+\delta\cap \A=y+\delta$ or $x+\delta\cap \A=y+\mathring{\delta}$ for some $y\in x+\delta$, where $\mathring \delta=\R^*_+\nu$.

 Suppose $x+\delta\cap \A=y+\mathring{\delta}$. Let $z\in y+\mathring{\delta}$. Then by (MA2) applied to $germ_y([y,z]\backslash\{y\})$, $\A\cap A\supset \mathrm{cl}(germ_y([y,z]\backslash\{y\}))\ni y$ because $\mathrm{cl}(germ_y([y,z]\backslash\{y\}))$ contains the closure of $germ_y([y,z]\backslash\{y\})$. This is absurd and thus $\A\cap x+\delta=y+\delta$, with $y\in \A$. One sets $y_\nu(x)=y\in\A$ (actually, $y_\nu$ only depends on $\delta$).

One has $\rho_{+\infty}(x+\delta)=\rho_{+\infty}(x)+\delta$ and $y\in \rho_{+\infty}(x)+\delta$. We define $T_\nu(x)$ as the unique element $T$ of $\mathbb{R}_+$ such that $y=\rho_{+\infty}(x)+T\nu$.

Let $\delta^-=-\R_+\nu$ and $x\in \I$. Similarly, one defines $y_\nu^-$ as the first point of $x+\delta^-$ meeting $\A$ and $T^-_\nu(x)$ as the element $T$ of $\R_+$ such that $\rho_{-\infty}(x)=y+T\nu$.

The proof of Theorem~\ref{thm inclusion} will rely on the fact that $T_\nu^-$ is bounded by some function of $\rho_{+\infty}-\rho_{-\infty}$, which is Corollary~\ref{corollaire majoration de T}. This bounding will be obtained by studying Hecke paths.

\begin{rque}
In the following, the choice of $\nu$ will not be very important. We will often need to choose $\nu\in Y\cap C^v_f$.
\end{rque}

\begin{example}
Let us describe the action of $y_\nu$ on a simple example. Let $\mathbf{G}$ be as in Subsection~\ref{SubsecMasure associée à un groupe de Kac-Moody} (we keep  notation as in the introduction). For $\alpha\in \Phi$, let $U_\alpha$ be the root subgoup associated to $\alpha$ and $x_\alpha:(\mathcal F,+)\rightarrow U_\alpha$ be  an isomorphism of algebraic group. Let $\omega:\mathcal F\rightarrow \Z\cup\{+\infty\}$ be a surjective valuation inducing the structure of ultrametric field of $\mathcal F$.  Define $\phi_\alpha: U_\alpha\rightarrow \R$ by $\phi_\alpha(u)=\omega(x_\alpha^{-1}(u))$ for all $u\in U_\alpha$.  For $k\in \R\cap\{+\infty\}$, one sets $U_{\alpha,k}=\phi_{\alpha}^{-1}([k,+\infty])$ and $D(\alpha,k)=\{x\in \A|\alpha(x)+k\geq 0\}$. Let $H=\mathbf T (\mathcal O)$. Then by 4.2 5) and 4.2 7) of \cite{gaussent2008kac}, $H$ is the fixer of $\A$ in $\I$ and for all $\in \R$,  $HU_{\alpha,k}$ is the fixer of  $D(\alpha,k)$ in $\I$.

Let $k\in \Z$, $u\in HU_{\alpha,k}\backslash HU_{\alpha,k+1}$, and $A=u.\A$. One has $A\cap \A=\{x\in \A|u.x=x\}=\{u\in \A|u.x\in \A\}$. In order to simplify, suppose that $\mathbf G$ is a reductive group (and thus $\I$ is a usual Bruhat-Tits building).  Let us show that  $D(\alpha,k)=A\cap \A$ (actually this remains true if $G$ is not a reductive group but the proof is a bit more difficult and we do not want to develop it here).

By 2.5.7 of  \cite{bruhat1972groupes}, $A\cap \A$ is enclosed (which means that the enclosure of $A\cap \A$ is $A\cap\A$) and thus $A\cap \A=D(\alpha,l)$ for some $l\in [k,+\infty)\cap \Z$. As $u\in HU_{\alpha,l}$, we deduce that $k=l$. 

One writes $\A=M\oplus \R\nu$, where $M=\{x\in \A| \alpha(x)=-k\}$. Then if  $x=m+\lambda\nu$, with $m\in M$ and $\lambda\in \R$, $y_\nu(u.x)=m+\mu(\lambda)\nu$, with $\mu(\lambda)=\min(\lambda,0)$.
\end{example}

\begin{lemma}\label{lemme distance vectorielle}
Let $x\in \mathcal{I}$ and $\nu\in C_f^v$. Let $y=y_\nu(x)$ and $T=T_\nu(x)$.

a)  Then $x\leq y$ and $d^v(x,y)=T\nu$.

b) One has $\rho_{+\infty}(x)\in Y$ if and only if $\rho_{-\infty}(x)\in Y$ if and only if $x\in \mathcal I_0$. In this case, $\rho_{+\infty}(x)\leq_{Q^\vee} \rho_{-\infty}(x)$.

\end{lemma}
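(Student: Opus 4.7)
The plan for part (a) is to apply Remark~\ref{rque caractérisation distance vectorielle} b) to a well-chosen $g\in G$. Axiom (MA3) gives an apartment $A$ containing both $x$ and the sector-germ $+\infty$; by Lemma~\ref{lemme demi-droite de base donnée} the ray $x+\delta$ lies in $A$, and the construction of $y_\nu$ recalled just above shows $y+\delta\subset A\cap\A$. Axiom (MA2) applied to the generic ray $y+\delta$ then provides an apartment isomorphism $\phi:A\to\A$ fixing $\mathrm{cl}_A(y+\delta)\ni y$, and by strong transitivity $\phi$ is induced by some $g\in G$. Because $\phi$ fixes a ray in the direction of $+\infty$, it agrees with $\rho_{+\infty}$ on $A$, so $g.x=\rho_{+\infty}(x)$ and $g.y=y$. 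The linear part of $\phi$, an element of $W^v$ fixing $\nu\in C_f^v$, must be the identity, and therefore $g.y-g.x=y-\rho_{+\infty}(x)=T\nu\in\overline{C_f^v}$. Remark~\ref{rque caractérisation distance vectorielle} b) then yields $x\leq y$ and $d^v(x,y)=T\nu$.

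For part (b), I first intend to identify $\I_0\cap\A=Y$. The inclusion $Y\subset\I_0\cap\A$ is immediate from $N.0=(W^v\ltimes Y).0=Y$ since $W^v$ fixes $0$, and conversely strong transitivity applied to any $g\in G$ with $g.0\in\A$ produces $n\in N$ with $n.0=g.0\in N.0=Y$. Since the retractions $\rho_{\pm\infty}$ are realized pointwise by elements of $G$ and $\I_0$ is $G$-invariant, they preserve $\I_0$. This gives the three-way equivalence $x\in\I_0\iff\rho_{+\infty}(x)\in Y\iff\rho_{-\infty}(x)\in Y$.

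For the remaining inequality $\rho_{+\infty}(x)\leq_{Q^\vee}\rho_{-\infty}(x)$, I would pick $\nu\in Y\cap C_f^v$, use part (a) to produce the preordered segment $[x,y_\nu(x)]$ of vectorial distance $T\nu$, and push it through $\rho_{-\infty}$. The Kapovich--Millson theorem in the masure form proved in \cite{gaussent2014spherical} identifies the image as a Hecke path $\pi$ of shape $T\nu$ with respect to $-C_f^v$, from $\pi(0)=\rho_{-\infty}(x)$ to $\pi(1)=y_\nu(x)=\rho_{+\infty}(x)+T\nu$. The standard $Q^\vee$-dominance of Hecke paths, $\pi(1)-\pi(0)\leq_{Q^\vee}T\nu$, then simplifies to $\rho_{-\infty}(x)-\rho_{+\infty}(x)\in Q^\vee_+$ after cancelling $T\nu$. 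The main obstacle will be this last step: I must invoke the precise Hecke path statement from \cite{gaussent2014spherical} in the Kac-Moody generality and ensure that, with $x\in\I_0$ and $\nu$ chosen integrally, the shape $T\nu$ lands in $Y\cap\overline{C_f^v}$, so that each $W^v$-chain jump at a bend contributes a genuine element of $\N\beta^\vee$ to the total displacement and the dominance holds over $\Z$ rather than merely over $\R$.
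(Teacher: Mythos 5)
Your part a) and the two equivalences in part b) are correct and essentially reproduce the paper's own argument: an apartment $A$ containing $x$ and $+\infty$, an element $g\in G$ inducing $\rho_{+\infty}$ on $A$, and Remark~\ref{rque caractérisation distance vectorielle}~b) give a); the equivalences follow from $\I_0\cap\A=Y$ together with the fact that the retractions are realized pointwise by elements of $G$.

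The gap is in the last step of b), and you have in fact put your finger on it yourself without resolving it. You propose to retract the segment $[x,y_\nu(x)]$, whose vectorial distance is $T\nu$ with $T=T_\nu(x)$, and you say you must ``ensure'' that the shape $T\nu$ lands in $Y\cap\overline{C_f^v}$. This cannot be ensured: $T$ is whatever nonnegative real the geometry dictates --- $y_\nu(x)$ is the point where the ray $x+\R_+\nu$ enters $\A$, i.e.\ a wall-crossing point, which is in general not a special point --- so even with $\nu\in Y\cap C_f^v$ and $x\in\I_0$ the shape $T\nu$ need not lie in $Y$. Without that integrality, the dominance statement of Lemma 2.4 of \cite{gaussent2014spherical} only yields $\rho_{-\infty}(x)-\rho_{+\infty}(x)\in Q^\vee_{\R_+}$, and since $Y\cap Q^\vee_{\R}$ may strictly contain $Q^\vee$, this does not give the asserted integral inequality $\rho_{+\infty}(x)\leq_{Q^\vee}\rho_{-\infty}(x)$. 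The missing (cheap, but genuine) idea is to extend the segment past $y_\nu(x)$: set $S=\lfloor T\rfloor+1$ and $z=\rho_{+\infty}(x)+S\nu\in x+\delta$, which lies in $\A$ beyond $y_\nu(x)$; then $d^v(x,z)=S\nu\in Y\cap C_f^v$, the retraction of $[x,z]$ by $\rho_{-\infty}$ is a Hecke path of integral shape $S\nu$ from $\rho_{-\infty}(x)$ to $z$, and Lemma 2.4~b) of \cite{gaussent2014spherical} gives $z-\rho_{-\infty}(x)\leq_{Q^\vee}S\nu=z-\rho_{+\infty}(x)$, whence the claim after cancellation. This is exactly the device the paper uses.
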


\begin{proof} Let $A$ be an apartment containing $x$ and $+\infty$ and $g\in G$ fixing $+\infty$ such that $A=g^{-1}.\A$. Then $x+\delta$ is the translate of a shortening $\delta'\subset A$ of $\delta$ (which means $\delta'=z+\delta$, with $z\in \delta$). As for all $z'\in z+\delta$, $z\leq z'$, one has $x\leq y$. As $d^v(x,y)=d^v(g.x,g.y)$ and $g_{|A}=\rho_{+\infty}$ one gets a).

For $x\in\mathcal{I}$, there exists $g_-,g_+\in G$ such that $\rho_{-\infty}(x)=g_-.x$ and $\rho_{+\infty}(x)=g_+.x$, which shows the claimed equivalence because $Y=G.0\cap \mathbb{A}$.

Suppose $x\in \mathcal{I}_0$. One chooses $\nu\in Y\cap C_f^v$. Let $S=\lfloor T\rfloor +1$, where $\lfloor.\rfloor$ is the floor function, and $z=\rho_{+\infty}(x)+S\nu\in x+\delta$. Then $d^v(x,z)=d^v(g_+.x,g_+.z)=d^v(\rho_{+\infty}(x),z)=S\nu\in Y\cap C^v_f$.

According to paragraph 2.3 of \cite{gaussent2014spherical}, the image $\pi$ of $[x,z]$ by $\rho_{-\infty}$ is a Hecke path of shape $z-\rho_{+\infty}(x)=S\nu$ with respect to $-C^v_f$ (unless the contrary is specified, "Hecke path" will mean with respect to $-C^v_f$). By applying Lemma 2.4b) of \cite{gaussent2014spherical} to $\pi$, one gets that $z-\rho_{-\infty}(x)\leq _{Q^\vee} d^v(x,z)=z-\rho_{+\infty}(x)$ and thus $\rho_{+\infty}(x)\leq_{Q^\vee} \rho_{-\infty}(x)$ and one has b).  
\end{proof}

\begin{lemma}\label{lemme image réciproque de segments}
Let $\tau:[0,1]\rightarrow \mathcal{I}$ be a segment of $\mathcal{I}$ such that $\tau(1)\in \mathbb{A}$ and $\rho_{-\infty}\circ\tau$ is a segment of $\mathbb{A}$ satisfying  $(\rho_{-\infty}\circ\tau)'=\nu\in \overline{C^v_f}$. Then $\tau([0,1])\subset \mathbb{A}$ and thus $\rho_{-\infty}\circ \tau=\tau$.
\end{lemma}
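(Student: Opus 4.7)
The plan is a connectedness argument. Let $S = \{t \in [0,1] : \tau([t,1]) \subset \A\}$; this set is upward closed, contains $1$, and a standard topological argument (continuity of $\tau$, closedness of $\A$ inside each apartment containing it) shows that $t_0 := \inf S$ belongs to $S$, so in particular $\tau(t_0) \in \A$. I will argue by contradiction that $t_0 = 0$; the second conclusion $\rho_{-\infty}\circ \tau = \tau$ then follows immediately since $\rho_{-\infty}$ fixes $\A$ pointwise. The case $\nu = 0$ is trivial: the Hecke path $\rho_{-\infty}\circ \tau$ then has shape $0$, so $d^v(\tau(0),\tau(1))=0$, forcing $\tau$ to be constant at $\tau(1) \in \A$. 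So assume $\nu \neq 0$ and $t_0 > 0$.

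The main step is to push $\tau$ slightly past $t_0$ to the left into $\A$. I would apply (MA3), via Remark~\ref{rque axioms MA3 et MA4 modifiés}, to the preordered germ $\mathfrak F := germ_{\tau(t_0)}(\tau|_{[t_0 - \epsilon_0, t_0]})$ and to the splayed chimney germ $-\infty$; this produces an apartment $A'$ containing both $\mathfrak F$ and $-\infty$. Axiom (MA4) then supplies an isomorphism $\phi : A' \to \A$ fixing pointwise the enclosure $\mathrm{cl}_{A'}(\mathfrak F \cup -\infty)$, which contains the entire closed sector $\tau(t_0) - \overline{C^v_f}$ based at $\tau(t_0)$. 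The crucial observation is that $\phi$ is the identity after identifying $A'$ with $\A$: its linear part lies in $W^v$ and stabilizes the chamber direction $-C^v_f$ (since $\phi$ fixes $-\infty$ pointwise), so by simple transitivity of $W^v$ on vectorial chambers it must be trivial, and fixing the point $\tau(t_0)$ then kills the translation part.

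Writing the segment $\tau$ near $\tau(t_0)$ inside $A'$ as $\tau(t_0 - s) = \tau(t_0) - sv$ for small $s > 0$ (a segment is straight in any apartment containing it), applying $\phi$ gives $\tau(t_0) - sv = \rho_{-\infty}\tau(t_0 - s) = \tau(t_0) - s\nu$, whence $v = \nu$. Since $\nu \in \overline{C^v_f}$, the points $\tau(t_0) - s\nu$ lie in the closed sector $\tau(t_0) - \overline{C^v_f} \subset \mathrm{cl}_{A'}(\mathfrak F \cup -\infty) \subset A' \cap \A$, so $\tau([t_0 - \epsilon_1, t_0]) \subset \A$ for some $\epsilon_1 > 0$, contradicting the minimality of $t_0$. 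The hardest point in this plan is pinning down $\phi$ as the identity locally, which rests on the simply transitive action of $W^v$ on vectorial chambers combined with the fixed point $\tau(t_0)$; everything else is routine bookkeeping with the masure axioms.
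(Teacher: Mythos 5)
Your argument is correct and follows the paper's proof in all essentials: locate the boundary point $\tau(t_0)$ between the part of the segment inside $\A$ and the rest (showing $\tau(t_0)\in\A$ via the enclosure-contains-closure argument from (MA2)), then use (MA3)/(MA4) to produce an apartment containing $-\infty$ and the germ of $\tau$ just before $t_0$ together with an isomorphism onto $\A$ fixing $\mathrm{cl}(\tau(t_0),-\infty)\supset\tau(t_0)-\overline{C^v_f}$, and conclude that the segment just before $t_0$ lies in that fixed sector because its image under $\rho_{-\infty}$ has derivative $\nu\in\overline{C^v_f}$. The paper reaches the final step a little more directly — since $g$ fixes $\tau(t_0)-\overline{C^v_f}$ pointwise and $g.x'=\rho_{-\infty}(x')$ lands in that sector, injectivity of $g$ immediately gives $x'=g.x'\in\A$ — which bypasses your discussion of the linear part of $\phi$ and of the identification of $A'$ with $\A$ (a point where your write-up is slightly loose, though easily repaired by taking the identification to be $\phi^{-1}$).
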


\begin{proof} Suppose $\tau([0,1])\not\subset \A$. Let $u=\sup\{t\in [0,1]|\tau(t)\notin \A\}$. Then by the same reasoning as in the proof that "$x+\delta\cap\A=y+\delta$" in the paragraph "Definition of $y_\nu$ and $T_\nu$", $x=\tau(u)\in\A$. One has $\tau(0)\leq x\leq \tau(1)$ (by the same reasoning as in the proof of Lemma \ref{lemme distance vectorielle} a)). 

By Remark~\ref{rque axioms MA3 et MA4 modifiés}, there exists an apartment $A=g^{-1}.\A$ with $g\in G$ containing $\mathfrak{R}=-\infty$ and $germ_x([x,\tau(0)])$. By axiom (MA4) (and Remark~\ref{rque axioms MA3 et MA4 modifiés}) applied to $\mathfrak{R}=-\infty$ and to $x$, we can suppose that $g$ fixes $\mathrm{cl}(x,-\infty)\supset x-C_f^v$.  
Let $x'\in [x,\tau(0)]\backslash\{x\}$ such that $[x,x']\subset A$ and $x'\notin \A$. Then $g.x'=\rho_{-\infty}(x')\in x-\R_+\nu\subset x-C_f^v$. Therefore, $x'=g.x'\in \A$, which is absurd. Hence $\tau([0,1])\subset \A$.  
\end{proof}

\section{Bounding of $T_\nu$ and Proof of Theorem~\ref{thm inclusion} }\label{sect bounding of T}

One defines $h: Q^\vee_{\mathbb{R}} \rightarrow \mathbb{R}$ by $h(x)=  \sum_{i\in I} x_i$, for all $x=\sum_{i\in I} x_i\alpha_i^\vee\in Q^\vee_{\R}$.

\begin{lemma}\label{lemme fin des chemins de Hecke longs}

Let $T\in \mathbb{R}_+$, $\mu\in \A$,  $a\in \mathbb{A}$, $\nu\in Y^{++}=Y\cap\overline{C^v_f}$ and suppose there exists a Hecke path $\pi$ from $a$ to $a+T\nu-\mu$ of shape $T\nu$. Then 

a) $\mu\in Q^\vee_{\R_+}$. Consequently $h(\mu)$ is well defined.

b) if $T>h(\mu)$, there exists $t$ such that $\pi$ is differentiable on $(t,1]$ and $\pi'_{|(t,1]}=T\nu$. Furthermore, let $t^*$ be the smallest $t \in [0,1]$ having this property, then $t^*\leq \frac{h(\mu)}{T}$.
\end{lemma}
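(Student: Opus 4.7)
The strategy is to introduce a non-negative piecewise-constant ``defect'' function $f\colon[0,1]\to\R_{\geq 0}$ measuring the failure of $\pi'(t)$ to equal $T\nu$, show it is non-increasing and integer-quantized with quantum $T$, and estimate $\int_0^1 f=h(\mu)$. For part~(a), the path $\pi$ is piecewise linear and its successive derivatives belong to $W^v.(T\nu)$. Because $\nu\in \overline{C^v_f}\cap Y$ and $Y\subset P^\vee$, one has $\alpha_i(\nu)\in\N$ for every $i\in I$. A standard induction on the length of $w\in W^v$ (using that $l(r_iw)>l(w)$ forces $w^{-1}\alpha_i\in \Phi^+$, which evaluates non-negatively on $\overline{C^v_f}$) yields $\nu-w.\nu\in Q^\vee_+$ for every $w\in W^v$; hence $T\nu-w.T\nu\in Q^\vee_{\R_+}$. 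Since $Q^\vee_{\R_+}$ is a closed convex cone and $\mu=\int_0^1 (T\nu-\pi'(t))\,dt$, we obtain $\mu\in Q^\vee_{\R_+}$, so $h(\mu)$ is well defined.

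Turning to part~(b), define $f(t)=h(T\nu-\pi'(t))\geq 0$ on the complement of the finite breakpoint set of $\pi$. Applying $h$ termwise gives $\int_0^1 f(t)\,dt=h(\mu)$. I claim that $f$ satisfies: (i) $f$ is non-increasing, and (ii) $f(t)\in\{0\}\cup[T,+\infty)$. For~(i): at each breakpoint $s$, the Hecke condition provides a $W^v_{\pi(s)}$-chain $\pi'_-(s)=\xi_0,\ldots,\xi_r=\pi'_+(s)$ whose successive differences are $\xi_i-\xi_{i-1}=-\beta_i(\xi_{i-1})\beta_i^\vee$ with $\beta_i\in\Phi^+$, so $\xi_i-\xi_{i-1}\in Q^\vee_{\R_+}$; summing, $\pi'_+(s)-\pi'_-(s)\in Q^\vee_{\R_+}$, hence $f(s_+)\leq f(s_-)$. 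For~(ii): if $\pi'(t)=w.T\nu\neq T\nu$ then $\nu-w.\nu\in Q^\vee_+\setminus\{0\}$ (integrality comes from $\nu\in Y\subset P^\vee$), hence $h(\nu-w.\nu)\in\Ne$ and $f(t)=T\cdot h(\nu-w.\nu)\geq T$.

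Since $T>h(\mu)=\int_0^1 f$, $f$ cannot be $\geq T$ almost everywhere, so by~(ii) there is $t_0\in[0,1]$ at which $\pi$ is differentiable with $f(t_0)=0$, i.e., $\pi'(t_0)=T\nu\in\overline{C^v_f}$. Remark~\ref{rque chemins de Hecke} then ensures that $\pi$ is differentiable on $[t_0,1]$ with derivative $T\nu$, establishing existence. Let $t^*$ denote the smallest $t$ with $\pi'_{|(t,1]}=T\nu$; by~(i) and~(ii), $f\geq T$ on $[0,t^*)$ (otherwise $f$ would already vanish before $t^*$, contradicting minimality) while $f\equiv 0$ on $(t^*,1]$, so
\[h(\mu)=\int_0^1 f(t)\,dt\geq \int_0^{t^*} f(t)\,dt\geq T\cdot t^*,\]
giving $t^*\leq h(\mu)/T$. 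The main obstacle is the careful unpacking of the Hecke-path definition to establish the monotonicity of $f$, together with the use of $\nu\in Y$ to obtain the integer quantization in~(ii); with both properties in hand, the conclusion reduces to a one-line integral estimate.
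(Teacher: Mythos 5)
Your proof is correct and follows essentially the same route as the paper: both rest on the fact that each derivative $w.T\nu$ of the $\lambda$-path satisfies $T\nu-w.T\nu\in Q^\vee_{\R_+}$, with $h$-value at least $T$ whenever $w.\nu\neq\nu$ (integrality coming from $\nu\in Y$), so that the total time with nonzero defect is at most $h(\mu)/T$, and both invoke Remark~\ref{rque chemins de Hecke} to see that once the derivative equals $T\nu$ it stays so. The only cosmetic differences are that you package the finite sum over a subdivision as the integral of a defect function $f$, reprove the auxiliary fact $\nu-w.\nu\in Q^\vee_+$ (which the paper cites as Lemma 2.4~a) of Gaussent--Rousseau), and add a monotonicity observation for $f$ that is not actually needed.
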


\begin{proof} The main idea of b) is to use the fact that during the time when $\pi'(t)\neq T\nu$, $\pi'(t)=T\nu-T\lambda(t)$ with $\lambda(t)\in Q^\vee_+\backslash \{0\}$. Hence for $T$ large, $\pi$ decreases quickly for the $Q^\vee$ order, but it cannot decrease too much because $\mu$ is fixed.

Let $t_0=0$, $t_1, \ldots,t_n=1$ be a subdivision of $[0,1]$ such that for all $i\in \llbracket 0,n-1\rrbracket $, $\pi_{|(t_i,t_{i+1})}$ is differentiable and let $w_i\in W^v$ be such that $\pi'_{|(t_i,t_{i+1})}=w_i.T\nu$. If $w_i.\nu=\nu$, one chooses $w_i=1$.

For $i\in \llbracket 0,n-1\rrbracket$, according to Lemma 2.4 a) of \cite{gaussent2014spherical}, $w_i.\nu =\nu -\lambda_i$, with $\lambda_i \in Q^\vee_+$ and if $w_i\neq 1$, $\lambda_i\neq 0$. One has \[\pi(1)-\pi(0)=T\nu-\sum_{i=0, w_i\neq 1}^{n-1}(t_{i+1}-t_i)T\lambda_i=T\nu -\mu\]

and one deduces a).

Suppose now $T>h(\mu)$. Let us show that there exists $i\in \llbracket 0,n-1\rrbracket$ such that $w_i=1$. Let $i\in \llbracket 0,n-1\rrbracket$. 

For all $i$ such that $w_i\neq 1$, one has $h(\lambda_i)\geq 1$. Hence $T\sum_{i=0, w_i\neq 1}^{n-1}(t_{i+1}-t_i)\leq h(\mu)$, and $\sum_{i=0, w_i\neq 1}^{n-1}(t_{i+1}-t_i)<1=\sum_{i=0}^{n-1}(t_{i+1}-t_i)$. Thus there exists $i\in \llbracket 0,n-1\rrbracket$ such that $w_i=1$.

By Remark~\ref{rque chemins de Hecke}, if $w_i=1$ for some $i$, then $w_j=1$ for all $j\geq i$. This shows the existence of $t^*$. We also have $t^*\leq \sum_{i=0, w_i\neq 1}^{n-1}(t_{i+1}-t_i)$ and hence the claimed inequality follows.  
\end{proof}

\medskip

From now on and until the end of this subsection, $\nu$ will be a fixed element of $C_f^v\cap Y$. We define $T_\nu:\I\rightarrow \R_+$ and $y_\nu:\I\rightarrow \A$ as in the paragraph "Definition of $y_\nu$ and $T_\nu$".

\begin{corollary}\label{corollaire majoration de T}
Let $x\in \I$ and $\mu=\rho_{-\infty}(x)-\rho_{+\infty}(x)$. Then $\mu\in Q^\vee_{\R_+}$ and $T_\nu(x)\leq h(\mu)$.
\end{corollary}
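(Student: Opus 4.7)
The plan is to study the preordered segment from $x$ to $y_\nu(x)$ inside $\I$, push it down to $\A$ via $\rho_{-\infty}$, apply Lemma~\ref{lemme fin des chemins de Hecke longs} to the resulting Hecke path, and then pull the conclusion back to $\I$ using Lemma~\ref{lemme image réciproque de segments}.

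Concretely, working in an apartment $A$ containing $x$ and $+\infty$, I would let $\tau\colon[0,1]\to\I$ denote the segment from $x$ to $y_\nu(x)$. By Lemma~\ref{lemme distance vectorielle}~a), $x\leq y_\nu(x)$ with $d^v(x,y_\nu(x))=T_\nu(x)\nu$, so, by paragraph 2.3 of \cite{gaussent2014spherical}, $\pi:=\rho_{-\infty}\circ\tau$ is a Hecke path of shape $T_\nu(x)\nu$. Using $\rho_{-\infty}(y_\nu(x))=y_\nu(x)$ (as $y_\nu(x)\in\A$) and $y_\nu(x)=\rho_{+\infty}(x)+T_\nu(x)\nu$, the endpoints of $\pi$ are $a:=\rho_{-\infty}(x)$ and $a+T_\nu(x)\nu-\mu$. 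Lemma~\ref{lemme fin des chemins de Hecke longs}~a) then yields $\mu\in Q^\vee_{\R_+}$ immediately, and in particular $h(\mu)$ is defined and non-negative.

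For the bound, I would argue by contradiction: assume $T:=T_\nu(x)>h(\mu)$, so in particular $T>0$. Lemma~\ref{lemme fin des chemins de Hecke longs}~b) furnishes $t^*\leq h(\mu)/T<1$ such that $\pi'_{|(t^*,1]}=T\nu$; consequently $\pi_{|[t^*,1]}$ is a genuine line segment of $\A$ with constant derivative $T\nu\in C_f^v\subset\overline{C_f^v}$. Applied to (a reparameterization of) $\tau_{|[t^*,1]}$---whose endpoint is $y_\nu(x)\in\A$ and whose $\rho_{-\infty}$-image is precisely this segment---Lemma~\ref{lemme image réciproque de segments} gives $\tau([t^*,1])\subset\A$, and in particular $\tau(t^*)\in\A$.

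To conclude, I would write $y_\nu(x)=x+s\nu$ in $A$ with $s\geq 0$; then $\tau(t^*)=x+t^*s\nu$ lies on the ray $x+\R_+\nu\subset A$. By definition of $y_\nu(x)$, however, $(x+\R_+\nu)\cap\A=y_\nu(x)+\R_+\nu=x+[s,+\infty)\nu$, so $\tau(t^*)\in\A$ forces $t^*s\geq s$. Combined with $t^*<1$, this forces $s=0$, hence $x=y_\nu(x)\in\A$, hence $T_\nu(x)=0$ and $\mu=0$, contradicting $T>h(\mu)\geq 0$. The main conceptual step is recognising that the segment $[x,y_\nu(x)]$ has both useful endpoints---its top sits in $\A$, and its $\rho_{-\infty}$-image is a Hecke path of large shape---so that the two tools above can be chained; everything else is direct bookkeeping using the preceding material.
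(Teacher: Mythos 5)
Your proof is correct and follows essentially the same route as the paper: retract the segment $[x,y_\nu(x)]$ by $\rho_{-\infty}$ to get a Hecke path of shape $T_\nu(x)\nu$ ending at $\rho_{-\infty}(x)+T_\nu(x)\nu-\mu$, apply Lemma~\ref{lemme fin des chemins de Hecke longs} for both assertions, and use Lemma~\ref{lemme image réciproque de segments} together with the minimality in the definition of $y_\nu(x)$ to rule out $T_\nu(x)>h(\mu)$. The paper states this last step as a direct observation ($\pi'$ never equals $T\nu$ where defined, by minimality of $T$), whereas you unwind it as an explicit contradiction; the content is identical.
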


\begin{proof} Let $y=y_\nu(x)$ and $T=T_\nu(x)$. Let $\pi$ be the image by $\rho_{-\infty}$ of $[x,y]$. This is a Hecke path from $\rho_{-\infty}(x)$ to $y=\rho_{+\infty}(x)+T\nu$, of shape $T\nu$. The minimality of $T$ and Lemma \ref{lemme image réciproque de segments} imply that $\pi'(t)\neq \nu$ for all $t\in [0,1]$ where $\pi$ is differentiable. By applying Lemma \ref{lemme fin des chemins de Hecke longs}b), we deduce that $T\leq h(\mu)$.  Lemma~\ref{lemme fin des chemins de Hecke longs}a) applied to $a=\rho_{-\infty}(x)$ and $\mu=\rho_{-\infty}(x)-\rho_{+\infty}(x)$ completes the proof. 
\end{proof}

\begin{rque}\label{rque majoration de T^-}
With the same reasoning but by considering Hecke with respect to $C^v_f$ we obtain an analogous bounding for $T_\nu^-$: for all $x\in \I$, $T_\nu^-(x)\leq h(\rho_{-\infty}(x)-\rho_{+\infty}(x))$.
\end{rque}

\begin{corollary}\label{cor points ayant meme image par les rétractions}
 Let $x\in\mathcal{I}$ such that $\rho_{+\infty}(x)=\rho_{-\infty}(x)$. Then $x\in \mathbb{A}$. Therefore, $\forall z\in \mathbb{A}$, $\rho_{+\infty}^{-1}(\{z\})\cap\rho_{-\infty}^{-1}(\{z\})=\{z\}$.
\end{corollary}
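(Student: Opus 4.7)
The approach is to read off this corollary directly from Corollary~\ref{corollaire majoration de T}. Pick any $\nu \in C_f^v \cap Y$ and set $\mu = \rho_{-\infty}(x) - \rho_{+\infty}(x)$; the hypothesis gives $\mu = 0$, so $h(\mu) = 0$. Corollary~\ref{corollaire majoration de T} then yields $0 \leq T_\nu(x) \leq 0$, forcing $T_\nu(x) = 0$, and therefore $y_\nu(x) = \rho_{+\infty}(x) + T_\nu(x)\,\nu = \rho_{+\infty}(x)$.

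The key step is to convert $T_\nu(x) = 0$ into $x \in \A$. I would fix an apartment $A$ containing $x$ and $+\infty$, so that on $A$ the retraction $\rho_{+\infty}$ coincides with an isomorphism of apartments $\phi : A \to \A$ fixing the sector-germ $+\infty$; since $W^v$ acts simply transitively on positive vectorial chambers, the linear part of $\phi$ must stabilise $C_f^v$ and hence equal $\mathrm{Id}_V$, so $\phi$ preserves the parametrisation of any ray in direction $\nu$. Inside $A$, write $y_\nu(x) = x + s\nu$ with $s \geq 0$ (this is possible because $y_\nu(x) \in x + \R_+\nu$). Applying $\phi$ gives $\phi(y_\nu(x)) = \phi(x) + s\nu = \rho_{+\infty}(x) + s\nu$; but also, since $y_\nu(x) \in \A$, we have $\phi(y_\nu(x)) = \rho_{+\infty}(y_\nu(x)) = y_\nu(x) = \rho_{+\infty}(x)$. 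Comparing the two expressions forces $s\nu = 0$, hence $s = 0$, so $x = y_\nu(x) \in \A$.

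The second assertion is then immediate: any $x \in \rho_{+\infty}^{-1}(\{z\}) \cap \rho_{-\infty}^{-1}(\{z\})$ satisfies the hypothesis of the first part, so $x \in \A$; but $\rho_{+\infty}$ restricts to the identity on $\A$, whence $x = \rho_{+\infty}(x) = z$.

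The only (mild) obstacle is the second paragraph, which amounts to unpacking the definition of $T_\nu$ and the construction of $\rho_{+\infty}$; but this reduces entirely to facts about retractions centred at splayed-chimney germs already recorded earlier in the text (well-definedness of $\rho_{+\infty}|_A$ and the fact that $\rho_{+\infty}$ is the identity on $\A$), so no new machinery is needed.
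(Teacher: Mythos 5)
Your proposal is correct and follows the same route as the paper: apply Corollary~\ref{corollaire majoration de T} with $\mu=\rho_{-\infty}(x)-\rho_{+\infty}(x)=0$ to get $T_\nu(x)=0$ and hence $x\in\A$. The paper leaves the step from $T_\nu(x)=0$ to $x\in\A$ implicit (it is essentially the definition of $T_\nu$ via the isomorphism $\rho_{+\infty}|_A$), and your second paragraph just makes that explicit.
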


\begin{proof}  Let $x\in\mathcal{I}$ such that $\rho_{+\infty}(x)=\rho_{-\infty}(x)$. Then by Corollary~\ref{corollaire majoration de T}, $T_\nu(x)=0$ and thus $x\in\A$.   
\end{proof}

\begin{lemma}\label{lemme_rétraction et distance vectorielle}
Let $x\in \mathcal{I}$ such that $y_\nu^-(x)\in C^v_f$. Then $0\leq x$ and $\rho_{-\infty}(x)=d^v(0,x)$.
\end{lemma}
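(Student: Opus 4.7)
The plan is to exploit the hypothesis $y_\nu^-(x)\in C_f^v$ to force $0$ into an apartment containing $x$ and the germ $-\infty$, and then to extract both $0\leq x$ and the value of $d^v(0,x)$ from a single retraction computation.

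First I would unwind the definitions: pick an apartment $A$ containing $x$ and the germ $-\infty$, and set $y:=y_\nu^-(x)$, $T:=T_\nu^-(x)\geq 0$. By construction $y\in A\cap \A$; in $A$ one has $x=y+T\nu$; and $\rho_{-\infty}(x)=y+T\nu$ in $\A$. The hypothesis $y\in C_f^v$ gives $0=y-y\in y-C_f^v$. Since the standard enclosure bound $\mathrm{cl}_A(\{y\}\cup -\infty)\supset y-C_f^v$ holds (the same one invoked in the proof of Lemma~\ref{lemme image réciproque de segments}), axiom (MA4) combined with Remark~\ref{rque axioms MA3 et MA4 modifiés} puts $y-C_f^v$ inside $A\cap \A$; in particular $0\in A$.

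Next I would use strong transitivity to produce $g\in G$ whose restriction to $A$ realizes the (MA4)-isomorphism $A\to \A$ that fixes $\mathrm{cl}_A(\{y\}\cup -\infty)$ pointwise. Then $g\cdot 0=0$, and by the well-definedness of the retraction (\cite{rousseau2011masures}, 2.6) one has $g\cdot x=\rho_{-\infty}(x)=y+T\nu$. Since $y\in C_f^v$, $T\geq 0$ and $\nu\in \overline{C_f^v}$, the difference
\[g\cdot x-g\cdot 0=y+T\nu\]
lies in $\overline{C_f^v}$. Applying Remark~\ref{rque caractérisation distance vectorielle} b) with $0$ and $x$ in place of $x$ and $y$ then delivers simultaneously $0\leq x$ and $d^v(0,x)=g\cdot x-g\cdot 0=\rho_{-\infty}(x)$, as required.

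The only mildly technical ingredient is the enclosure inclusion $\mathrm{cl}_A(\{y\}\cup -\infty)\supset y-C_f^v$, which the author already uses in Lemma~\ref{lemme image réciproque de segments}; I therefore do not anticipate any substantive obstacle beyond careful bookkeeping with the masure axioms.
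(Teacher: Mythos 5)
Your argument is correct and is essentially the paper's own proof: both take an apartment $A$ containing $x$ and $-\infty$, use the enclosure $\mathrm{cl}(y,-\infty)\supset y-\overline{C_f^v}\ni 0$ together with (MA4) to get a $g\in G$ fixing $0$ with $g.x=\rho_{-\infty}(x)=y+T\nu\in C_f^v$, and conclude via Remark~\ref{rque caractérisation distance vectorielle}~b). No substantive differences to report.
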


\begin{proof} Let $y^-=y^-_\nu(x)$. Let $A$ be an apartment containing $x$ and $-\infty$. By (MA4) there exists $g\in G$ such that $A=g^{-1}.\A$ and $g$ fixes $\mathrm{cl}(y,-\infty)\supset y-\overline{C_f^v}\ni 0$. Then $g.x-g.y^-=\rho_{-\infty}(x)-y^-=T^-\nu\in C_f^v$ and $g.y^--g.0=y^-$. Thus $g.x-g.0=\rho_{-\infty}(x)\in C_f^v$ and we can conclude by Remark~\ref{rque caractérisation distance vectorielle}.  
\end{proof}

We can now prove Theorem~\ref{thm inclusion}: 

Let $\mu\in \A$. Then if $\mu\notin Q_{\R_-}^\vee$, $\rho_{+\infty}^{-1}(\{\lambda+\mu\})\cap \rho_{-\infty}^{-1}(\{\lambda\})$ is empty for all $\lambda\in \A$. If $\mu\in Q^\vee_{\R_-
}$, then for $\lambda\in \A$ sufficiently dominant, $\rho_{+\infty}^{-1}(\{\lambda+\mu\})\cap \rho_{-\infty}^{-1}(\{\lambda\})\subset S^v(0,\lambda)\cap \rho_{+\infty}^{-1}(\{\lambda+\mu\}) $. 

\begin{proof} The condition on $\mu$ comes from Corollary~\ref{corollaire majoration de T}. 

Suppose $\mu\in Q^\vee_{\R_-}$.
Let $\lambda\in \A$. Let $y^-=y^-_\nu$ and $T^-=T_\nu^-$. By Corollary~\ref{corollaire majoration de T} and remark~\ref{rque majoration de T^-}, if $x\in \rho_{+\infty}^{-1}(\{\lambda+\mu\})\cap \rho_{-\infty}^{-1}(\{\lambda\})$, then $y^-(x)\in [\lambda-T^-(x)\nu, \lambda]\subset \lambda-[0,h(-\mu)]\nu$. 
For all $i\in I$, $\alpha_i([0,h(-\mu)]\nu)$ is bounded. Consequently for $\lambda$ sufficiently dominant, $\alpha_i(\lambda-[0,h(-\mu)]\nu)\subset \mathbb{R}^*_+$ for all $i\in I$.  For such a $\lambda$,  $y^-\big(\rho_{+\infty}^{-1}(\{\lambda+\mu\})\cap \rho_{-\infty}^{-1}(\{\lambda\})\big)\subset C^v_f$. We conclude the proof with Lemma \ref{lemme_rétraction et distance vectorielle}.
  
\end{proof}

\section{Study of "Translations" of $\I$ and Proof of Theorem~\ref{thm invariance des cardinaux}}\label{sect translations}

Let $\A_{in}=\bigcap_{i\in I} \ker \alpha_i$.

In this subsection we introduce some kind of "translations" of $\I$ of an inessential vector and show that they have very nice properties. It will be useful to "generalize theorems from $Y$ to $Y+\A_{in}$" by getting rid of the inessential part. First example of this technique will be Corollary~\ref{cor finitude des boules} which generalizes a theorem of \cite{gaussent2014spherical}. Then we study elements of $G$ inducing a translation on $\A$. We show that they commute with $\rho_{+\infty}$ and $\rho_{-\infty}$. We then can see that for fixed $\mu\in Q^\vee$, the $\rho_{+\infty}^{-1}(\{\lambda+\mu\})\cap\rho_{-\infty}^{-1}(\{\lambda\})$, for $\lambda\in Y+\A_{in}$, are some translates of  $\rho_{+\infty}^{-1}(\{\mu\})\cap\rho_{-\infty}^{-1}(\{0\})$, which enables to show Theorem~\ref{thm invariance des cardinaux} by using Theorem~\ref{thm inclusion} and Corollary~\ref{cor finitude des boules}.

\begin{lemma}\label{lemme partie inessentielle}
 Let $\nu\in\A_{in}$ and $a\in \I$. Then $|S^v(a,\nu)|=1$. Moreover, if $a\in \A$, $S^v(a,\nu)=\{a+\nu\}$. 

\end{lemma}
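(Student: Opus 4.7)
The plan is to establish the claim when $a\in\A$ and then extend to arbitrary $a\in\I$ by $G$-equivariance. The key observation is that $\nu\in\A_{in}=\bigcap_{i\in I}\ker\alpha_i$ is pointwise fixed by $W^v$, so $W^v.\nu=\{\nu\}$; moreover $-\nu\in V_{in}$ as well, and $V_{in}\subset\overline{C^v_f}\subset\T$.

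Assume first $a\in\A$. Since $a+\nu\in\A$ and $(a+\nu)-a=\nu\in\overline{C^v_f}$, Remark~\ref{rque caractérisation distance vectorielle}~b) applied with $g=\Id$ gives $a\leq a+\nu$ and $d^v(a,a+\nu)=\nu$, so $a+\nu\in S^v(a,\nu)$. For the reverse inclusion, let $y\in S^v(a,\nu)$ and let $\tau:[0,1]\to\I$ parametrize the segment $[a,y]$. By the description of retracted segments recalled in paragraph~2.3 of \cite{gaussent2014spherical}, $\pi:=\rho_{-\infty}\circ\tau$ is a Hecke path of shape $d^v(a,y)=\nu$ with $\pi(0)=\rho_{-\infty}(a)=a$. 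Its derivatives lie in $W^v.\nu=\{\nu\}$, so $\pi(t)=a+t\nu$ for all $t$; in particular $\rho_{-\infty}(y)=a+\nu$.

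Now consider the reversed parametrization $\tilde\tau(t):=\tau(1-t)$. Inside any apartment containing $a$ and $y$, the displacement $y-a$ lies in $W^v.\nu=\{\nu\}$, so $a-y=-\nu\in V_{in}\subset\T$ and hence $y\leq a$ as well. Thus $\tilde\tau$ is again a preordered segment of $\I$; it satisfies $\tilde\tau(1)=a\in\A$ and $(\rho_{-\infty}\circ\tilde\tau)(t)=(a+\nu)-t\nu$, whose constant derivative $-\nu$ belongs to $\overline{C^v_f}$. Lemma~\ref{lemme image réciproque de segments} then forces $\tilde\tau([0,1])\subset\A$, hence $y\in\A$, and inside $\A$ the equation $d^v(a,y)=\nu$ together with $W^v.\nu=\{\nu\}$ yields $y=a+\nu$, so $S^v(a,\nu)=\{a+\nu\}$.

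For arbitrary $a\in\I$, choose an apartment $A$ of $\I$ containing $a$ and $g\in G$ with $g.A=\A$; by $G$-invariance of $\leq$ and $d^v$, multiplication by $g$ restricts to a bijection $S^v(a,\nu)\to S^v(g.a,\nu)$, whose target is a singleton by the case already settled. The most delicate step is the one using the reversed parametrization $\tilde\tau$: a direct application of Lemma~\ref{lemme image réciproque de segments} to $\tau$ would fail because $y=\tau(1)$ need not lie in $\A$, and the reversal is legitimate only because both $\nu$ and $-\nu$ belong to the Tits cone -- a phenomenon specific to inessential vectors.
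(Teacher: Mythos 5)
Your proof is correct and follows essentially the same route as the paper: reduce to $a\in\A$ by $G$-equivariance, observe that $W^v$ fixes $\nu$ so the retracted path has constant derivative $\pm\nu\in\overline{C^v_f}$, and apply Lemma~\ref{lemme image réciproque de segments} to the parametrization of the segment that ends at $a\in\A$. The paper simply writes down the reversed parametrization from the start rather than passing through the forward one first, but the key point --- that $-\nu$ still lies in $\overline{C^v_f}$ because $\nu$ is inessential --- is identical.
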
 

\begin{proof} Let $h\in G$ such that $h.a\in \A$. Then $S^v(a,\nu)=h^{-1}.S^v(h.a,\nu)$. Therefore one can assume $a\in \A$. 

Let $x\in S^v(a,\nu)$. Let $g\in G$ such that $x,a\in g^{-1}.\A$ and $g.x-g.a=\nu$. Let $\tau:[0,1]\rightarrow \I$ defined by $\tau(t)=g^{-1}.(g.a+(1-t)\nu)$. Then $\pi=\rho_{-\infty}\circ \tau$ is a Hecke path of shape $-\nu$ and in particular,  it is a $-\nu$-path. For all $t$ where $\pi$ is differentiable, there exists $w(t)\in W^v$ such that $\pi'(t)=-w(t).\nu=-\nu$. As $W^v$ acts trivially on $\A_{in}$, $\pi'(t)=-\nu$ for all such $t$ and thus $\pi$ is differentiable on $[0,1]$ and $\pi'=-\nu$. As $\tau(1)=a\in \A$, one can apply Lemma~\ref{lemme image réciproque de segments} and we get that $\tau([0,1])\subset \A$. Therefore, $x\in \A$ and there exists $w\in W^v$ such that $x-a=w.\nu=\nu$.  
\end{proof}

\medskip

This lemma enables us to define  a kind of translation of an inessential vector. Let $\nu\in \A_{in}$. Let $\tau_\nu:\I\rightarrow \I$ which associates to $x\in \I$ the unique element of $S^v(x,\nu)$. Then we have the following lemma: 

\begin{lemma}\label{lemme propriétés des translations}
\begin{enumerate}
Let $\nu\in \A_{in}$ and $\tau=\tau_\nu$. Then:

\item For all $x\in \A$, $\tau(x)=x+\nu$.\label{item tau restreint à A}

\item For all $g\in G$, $g\circ\tau=\tau\circ g$. In particular, for all $x\in \I$, if $x$ is in an apartment $A$,
 then $\tau(x)\in A$, and if $x=g.a$ with $g\in G$ and $a\in\A$, then $\tau(x)=g.(x+\nu)$.\label{item commutation de tau}

\item The map $\tau$ is a bijection, its inverse being $\tau_{-\nu}$.\label{item bijectivité des translations}

\item The map $\tau$ commutes with $\rho_{+\infty}$ and $\rho_{-\infty}.$\label{item commutation translation retraction}

\item Let $x\in \I$ and $\lambda\in \overline{C_f^v}$, then $\tau(S^v(x,\lambda))=S^v(\tau(x),\lambda)=S^v(x,\lambda+\nu).$\label{item translation d'une boule}

\end{enumerate}
\end{lemma}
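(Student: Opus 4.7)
The plan is to build everything on top of the equivariance assertion \ref{item commutation de tau}; once that is established, the other items follow formally from $G$-invariance of $d^v$ and of the preorder $\leq$ combined with the fact that $W^v$ fixes $\nu\in \A_{in}$ pointwise. Item \ref{item tau restreint à A} is immediate: the ``moreover'' clause of Lemma~\ref{lemme partie inessentielle} asserts $S^v(x,\nu) = \{x+\nu\}$ for $x \in \A$, so $\tau(x) = x+\nu$ by the definition of $\tau$. For item \ref{item commutation de tau} I would use that every $g \in G$ preserves $\leq$ and $d^v$, so the action of $g$ carries $S^v(x,\nu)$ bijectively onto $S^v(g.x,\nu)$; both sets are singletons by Lemma~\ref{lemme partie inessentielle}, so their unique elements correspond and $g.\tau(x) = \tau(g.x)$. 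The ``in particular'' clauses then drop out: if $x \in A = g^{-1}.\A$ then $g.\tau(x) = \tau(g.x) = g.x + \nu \in \A$ by item \ref{item tau restreint à A}, hence $\tau(x) \in A$. For item \ref{item bijectivité des translations}, note that $-\nu \in \A_{in}$ so $\tau_{-\nu}$ is defined; for general $x\in \I$ pick $g\in G$ with $g.x \in \A$ and reduce via item \ref{item commutation de tau} to the apartment case, where $\tau_\nu$ and $\tau_{-\nu}$ are ordinary translations by $\pm\nu$ and therefore mutually inverse.

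For item \ref{item commutation translation retraction}, I would invoke (MA3) (together with Remark~\ref{rque axioms MA3 et MA4 modifiés} to allow $x$ as the face $F$) to produce an apartment $A$ containing $x$ and $+\infty$; writing $A = g^{-1}.\A$ with $g\in G$ fixing a sector-germ of direction $C_f^v$ so that $\rho_{+\infty}$ agrees with $g$ on $A$, item \ref{item commutation de tau} gives $\tau(x) \in A$ and then
\[\rho_{+\infty}(\tau(x)) = g.\tau(x) = \tau(g.x) = g.x + \nu = \rho_{+\infty}(x) + \nu = \tau(\rho_{+\infty}(x)),\]
using item \ref{item tau restreint à A} at the penultimate step. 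The argument for $\rho_{-\infty}$ is identical, with a sector-germ of direction $-C_f^v$.

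For item \ref{item translation d'une boule} the key remark is that because $W^v$ fixes $\nu$ pointwise, one has $(\mu + \nu)^{++} = \mu^{++} + \nu$ for every $\mu \in \T$, and $\T + \nu = \T$; both follow from $\alpha_i(\nu) = 0$ and $w.\nu = \nu$ for all $w\in W^v$. Given $y \in S^v(x,\lambda)$, I would pick $g \in G$ with $g.x, g.y \in \A$ and $(g.y - g.x)^{++} = \lambda$; by item \ref{item commutation de tau}, $g.\tau(x) = g.x + \nu$ and $g.\tau(y) = g.y + \nu$ lie in $\A$. Since $g.\tau(y) - g.\tau(x) = g.y - g.x$ and $g.\tau(y) - g.x = (g.y - g.x) + \nu$, the two identities above give $d^v(\tau(x),\tau(y)) = \lambda$ and $d^v(x,\tau(y)) = \lambda + \nu$, while $\T + \nu = \T$ yields $\tau(x) \leq \tau(y)$ and $x \leq \tau(y)$. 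This proves $\tau(S^v(x,\lambda)) \subset S^v(\tau(x),\lambda) \cap S^v(x,\lambda+\nu)$; the reverse inclusions are obtained by replaying the same argument with $-\nu$ in place of $\nu$ and appealing to item \ref{item bijectivité des translations}. I do not anticipate a genuine obstacle: the conceptual content is entirely in Lemma~\ref{lemme partie inessentielle} and the equivariance \ref{item commutation de tau}, after which every remaining item is a short formal consequence of $G$-equivariance of $d^v$ and $\leq$ together with the pointwise $W^v$-fixity of $\nu$.
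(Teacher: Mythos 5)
Your proof is correct and follows essentially the same route as the paper's: each item is deduced from the singleton property of $S^v(\cdot,\nu)$ given by Lemma~\ref{lemme partie inessentielle} together with the $G$-invariance of $d^v$, the retraction statement is proved by passing to an apartment containing $x$ and the relevant sector-germ, and the reverse inclusions in the last item are obtained via $\tau_{-\nu}$ exactly as in the paper. The only cosmetic difference is that in the last item the paper normalizes $g$ by an element of $N$ so that $g.u-g.x=\lambda$ on the nose, whereas you keep a general $g$ and use the identities $(\mu+\nu)^{++}=\mu^{++}+\nu$ and $\mathcal{T}+\nu=\mathcal{T}$; both are valid.
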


\begin{proof} Part~\ref{item tau restreint à A} is a part of Lemma~\ref{lemme partie inessentielle}.

Choose $x\in \I$ and $g\in G$. Then $d^v(x,\tau(x))=\nu$ and thus $d^v(g.x,g.\tau(x))=\nu$. Consequently $g.\tau(x)=\tau(g.x)$ by definition of $\tau$. Let $A$ be an apartment containing $x$, $A=h.\A$, with $h\in G$. Then $\tau(x)=\tau(h.a)$ with $a\in \A$, hence $\tau(x)=h.(x+\nu)\in A=h.\A$ and we have~\ref{item commutation de tau}.

Let $x\in\I$, $x=g.a$ with $a\in \A$. By part~\ref{item commutation de tau} applied to $\tau$ and $\tau_{-\nu}$, one has $\tau_{-\nu}(\tau(g.a))=g.\tau_{-\nu}(\tau(a))=g.a$ and thus $\tau_{-\nu}\circ\tau=\Id$. This is enough to show ~\ref{item bijectivité des translations}.

Let $x\in \I$ and $g\in G$ fixing $+\infty$ such that $g.x=\rho_{+\infty}(x)$. 
Then $\tau(x)\in g^{-1}.\A$, thus $g.\tau(x)=\rho_{+\infty}(\tau(x))$ and by
 part~\ref{item commutation de tau}, $g.\tau(x)=\tau(g.x)$. Hence, $\tau$ and
  $\rho_{+\infty}$ commute and by the same reasoning, this is also true for $\tau$ and $\rho_{-\infty}$.

Let $x\in\I$ and $\lambda\in \overline{C_f^v}$. Let $u\in S^v(x,\lambda)$. There exists $g'\in G$ such that $x,u\in g'^{-1}.\A$ and $g'.u-g'.x=w.\lambda$, for some $w\in W^v$. Let $n\in N$ inducing $w$ on $\A$ and $g=n^{-1}g'$. Then $x,u\in g^{-1}.\A$ and $g.u-g.x=\lambda$. We have $g.\tau(u)-g.x=\tau(g.u)-g.x=\lambda+\nu$. Therefore, $\tau(S^v(x,\lambda))\subset S^v(x,\lambda+\nu)$. Applying this result with $\tau_{-\nu}$ yields $\tau_{-\nu}(S^v(x,\lambda+\nu))\subset S^v(x,\lambda)$ and thus $\tau(S^v(x,\lambda))=S^v(x,\lambda+\nu)$.

One has $g.\tau(u)-g.\tau(x)=(g.u+\nu)-(g.x+\nu)=\lambda$ and thus $\tau(S^v(x,\lambda))\subset S^v(\tau(x),\lambda)$. Again, by considering $\tau_{-\nu}$, we have that $\tau(S^v(x,\lambda))=S^v(\tau(x),\lambda)$.  
\end{proof}

\begin{corollary}\label{cor finitude des boules}
Let $\lambda\in \A$ and $\mu\in Y+\A_{in}$. Then for all $\lambda_{in}\in \A_{in}$, $\tau_{\lambda_{in}}\big(S^v(0,\lambda)\cap \rho_{+\infty}^{-1}(\{\mu\})\big)=S^v(0,\lambda+\lambda_{in})\cap \rho_{+\infty}^{-1}(\{\mu+\lambda_{in}\})$. In particular, for all $\lambda\in Y+\A_{in}$, $S^v(0,\lambda)\cap \rho_{+\infty}^{-1}(\{\mu+\lambda\})$ is finite and is empty if $\mu\notin Q^\vee_-$.

\end{corollary}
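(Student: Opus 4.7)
The plan is to derive the equality as a formal consequence of Lemma~\ref{lemme propriétés des translations}, then reduce the ``in particular'' statement to the case $\lambda\in Y^{++}$ already handled in \cite{gaussent2014spherical}.

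For the equality I would argue as follows. Since $\tau_{\lambda_{in}}$ is a bijection of $\I$ by item~\ref{item bijectivité des translations}, it commutes with intersections; applying item~\ref{item commutation translation retraction} together with item~\ref{item tau restreint à A} one has
\[ \tau_{\lambda_{in}}(\rho_{+\infty}^{-1}(\{\mu\}))=\rho_{+\infty}^{-1}(\{\tau_{\lambda_{in}}(\mu)\})=\rho_{+\infty}^{-1}(\{\mu+\lambda_{in}\}) \]
since $\mu\in\A$, and item~\ref{item translation d'une boule} at $x=0$ gives $\tau_{\lambda_{in}}(S^v(0,\lambda))=S^v(0,\lambda+\lambda_{in})$. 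The claimed identity is just the conjunction of these two equalities.

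For the ``in particular'' part I would first assume $\lambda\in\overline{C^v_f}$, since otherwise $S^v(0,\lambda)=\emptyset$ and the claim is trivial. Writing $\lambda=\lambda_Y+\lambda_{in}$ with $\lambda_Y\in Y$ and $\lambda_{in}\in\A_{in}$, the fact that each $\alpha_i$ vanishes on $\A_{in}$ implies $\lambda_Y\in Y^{++}$. The first part of the corollary, applied with $-\lambda_{in}$ in place of $\lambda_{in}$, provides a bijection
\[ \tau_{-\lambda_{in}}\colon S^v(0,\lambda)\cap\rho_{+\infty}^{-1}(\{\mu+\lambda\})\;\xrightarrow{\sim}\;S^v(0,\lambda_Y)\cap\rho_{+\infty}^{-1}(\{\mu+\lambda_Y\}), \]
which reduces the problem to the right-hand side, with $\lambda_Y\in Y^{++}$.

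To finish, I would observe that $S^v(0,\lambda_Y)\subset G.0=\I_0$, so for any $x$ in the reduced intersection Lemma~\ref{lemme distance vectorielle}~b) forces $\rho_{+\infty}(x)=\mu+\lambda_Y\in Y$, hence $\mu\in Y$. In the case $\mu\in (Y+\A_{in})\setminus Y$ the set is therefore empty, in agreement with the emptiness claim since such a $\mu$ does not lie in $Q^\vee\subset Y$. In the remaining case $\mu\in Y$, the finiteness of $S^v(0,\lambda_Y)\cap\rho_{+\infty}^{-1}(\{\mu+\lambda_Y\})$ and its emptiness when $\mu\notin Q^\vee_-$ are exactly the statement proved in Section~5 of \cite{gaussent2014spherical}. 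There is no real obstacle: once Lemma~\ref{lemme propriétés des translations} provides the translation $\tau_{\lambda_{in}}$ with the correct formal properties, everything reduces either to the integrality observation of Lemma~\ref{lemme distance vectorielle}~b) or to the already-known statement of \cite{gaussent2014spherical}.
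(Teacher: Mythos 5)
Your proof is correct and follows essentially the same route as the paper: the equality is read off from the stated parts of Lemma~\ref{lemme propriétés des translations}, the ``in particular'' claim is reduced to $\lambda\in Y$ by applying $\tau_{-\lambda_{in}}$, one observes that the set is empty unless $\mu\in Y$ (you via Lemma~\ref{lemme distance vectorielle}~b), the paper via an explicit pair of group elements, which amounts to the same thing), and the remaining case is delegated to Section~5 of \cite{gaussent2014spherical}. No changes needed.
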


\begin{proof} The first assertion is a consequence of Lemma~\ref{lemme propriétés des translations} part~\ref{item bijectivité des translations}, \ref{item commutation translation retraction} and \ref{item translation d'une boule}.

Let $\lambda\in Y+\A_{in}$ and $\lambda_{in}\in \A_{in}$ such that $\tau(\lambda)\in Y$, 
with $\tau=\tau_{\lambda_{in}}$. Then 
$\tau\big(S^v(0,\lambda)\cap\rho_{+\infty}^{-1}(\{\lambda+\mu\})\big)=S^v(0,\tau(\lambda))\cap\rho_{+\infty}^{-1} (\{\tau(\lambda+\mu )\})$. Consequently, one can assume $\lambda\in Y$. 

Suppose $S^v(0,\lambda)\cap \rho_{+\infty}^{-1}(\{\mu+\lambda\})$ is nonempty. Let $x$ be in this set. Then there exists $g,h\in G$ such that $g.x=\lambda$ and $h.x=\mu+\lambda$. Thus $\lambda+\mu=h.g^{-1}.\lambda\in \I_0\cap \A=Y$ and therefore, $\mu\in Y$. We can now conclude because the finiteness and the condition on  $\mu$ are shown in \cite{gaussent2014spherical}, Section 5: if $\lambda, \mu\in Y$ then $S^v(0,\lambda)\cap \rho_{+\infty}^{-1}(\{\mu\})$ is finite and it is empty if $\mu\notin Q^\vee_-$ (the cardinals of these sets correspond to the $n_\lambda(\nu)$ of loc. cit.).
 
\end{proof}

We now show a lemma similar to Lemma~\ref{lemme propriétés des translations} part~\ref{item commutation translation retraction} for translations of $G$: 

\begin{lemma}\label{lemme commutation des translations et rétractions}
Let $n\in G$ inducing a translation on $\A$. Then $n\circ\rho_{+\infty}=\rho_{+\infty}\circ n$ and $n\circ \rho_{-\infty}=\rho_{-\infty}\circ n$.
\end{lemma}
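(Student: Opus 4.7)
The plan is to exploit the fact that an element of $G$ inducing a translation on $\A$ automatically preserves the sector-germs $\pm\infty$, and then to invoke the uniqueness statement built into the definition of the retractions. I will write out the argument for $\rho_{+\infty}$; the case of $\rho_{-\infty}$ is formally identical after swapping $+\infty$ and $-\infty$.

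First I would record the key observation: if $n\in G$ acts on $\A$ as a translation of some vector $v\in V$, then for any sector $\mathfrak{s}=y+C^v_f$ representing $+\infty$ one has $n(\mathfrak{s})=(y+v)+C^v_f$, which is again a sector of direction $C^v_f$ and hence still a representative of the germ $+\infty$. In particular $n$ and $n^{-1}$ both fix the germ $+\infty$ (and $-\infty$) in the sense used in the excerpt: they send some element of this germ into another element of the same germ. They also stabilize $\A$ setwise.

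Next, fix $x\in \I$. Using (MA3) choose an apartment $A$ containing $x$ and $+\infty$, and using (MA4) choose an isomorphism $\phi:A\to \A$ fixing pointwise a sector $\mathfrak{s}$ representing $+\infty$, so that by definition $\rho_{+\infty}(x)=\phi(x)$. Since $n$ acts on $\I$ by a $G$-automorphism, $n(A)$ is again an apartment, and the restriction $n:A\to n(A)$ is an isomorphism of apartments. I would then consider the composition
\[
\psi \,=\, n\circ\phi\circ n^{-1}\,:\, n(A)\longrightarrow \A,
\]
which is an isomorphism of apartments. The apartment $n(A)$ contains $n(x)$ and, by the preliminary observation, also contains $+\infty$. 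Moreover for every $z\in n(\mathfrak{s})$ one has $n^{-1}(z)\in\mathfrak{s}$, hence $\phi(n^{-1}(z))=n^{-1}(z)$, so $\psi(z)=z$; thus $\psi$ fixes pointwise the sector $n(\mathfrak{s})$, which again represents $+\infty$.

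Now I would conclude via the uniqueness property of the retraction (the independence on the choices of $A$ and of the isomorphism, recalled just before the definition of $\rho_{A,\mathfrak{R}}$): it yields $\rho_{+\infty}(n(x)) = \psi(n(x)) = n(\phi(x)) = n(\rho_{+\infty}(x))$. The main (and essentially only) point requiring care is the verification that $\psi$ fixes a pointwise representative of the germ $+\infty$, which is exactly what the translation hypothesis on $n$ delivers; without this hypothesis $n$ could mix different germs and the argument would break.
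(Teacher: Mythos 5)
Your proof is correct and follows essentially the same route as the paper: both conjugate the isomorphism $\phi:A\to\A$ by $n$ to obtain $n\circ\phi\circ n^{-1}:n(A)\to\A$, check that this fixes a representative of the germ $+\infty$ (which is where the translation hypothesis enters), and conclude by the well-definedness of the retraction. Your write-up merely makes explicit the verification that $n(\mathfrak{s})$ still represents $+\infty$ and is fixed pointwise, which the paper leaves implicit.
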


\begin{proof} 
Let $x\in\mathcal{I}$ and $A$ be an apartment containing $x$ and $+\infty$. Then $n.A$ is an apartment containing $+\infty$. Let $\phi:A\rightarrow \mathbb{A}$ an isomorphism fixing $+\infty$. We have $n.x\in n.A$, and $n\circ\phi\circ n^{-1}:n.A\rightarrow \mathbb{A}$ fixes $+\infty$. Hence $\rho_{+\infty}(n.x)=n\circ\phi\circ n^{-1}(n.x)=n\circ\phi(x)=n\circ\rho_{+\infty}(x)$ and thus $n\circ \rho_{+\infty}=\rho_{+\infty}\circ n$. By the same reasoning applied to $\rho_{-\infty}$, we get the lemma.  
 \end{proof}

\begin{lemma}\label{lemme y- des translatés}
Let $n\in G$ inducing a translation on $\mathbb{A}$. Let $\lambda_{in}\in \A_{in}$. Set $\tau=\tau_{\lambda_{in}}\circ n$. Let $\nu\in C_f^v$ and $y^-=y_\nu^-$. Then $\tau\circ y^-=y^-\circ \tau$.
\end{lemma}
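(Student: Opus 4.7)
My plan is to derive the identity from the set-theoretic characterisation of $y^-$ established in Section~\ref{sect preliminaries}: setting $\delta^- = -\R_+\nu$, one has $(x + \delta^-) \cap \A = y^-(x) + \delta^-$ for every $x \in \I$, with $y^-(x)\in\A$ uniquely determined. To exploit this, I would verify three properties of the map $\tau = \tau_{\lambda_{in}} \circ n$: (i) $\tau$ is a bijection of $\I$; (ii) $\tau$ stabilises $\A$ and acts on it by an affine translation; (iii) $\tau$ preserves rays of direction $-\R_+\nu$, in the sense that $\tau(x + \delta^-) = \tau(x) + \delta^-$ for every $x \in \I$.

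Points (i) and (ii) are immediate: (i) combines Lemma~\ref{lemme propriétés des translations}(\ref{item bijectivité des translations}) with $n \in G$, while (ii) combines $n.\A = \A$ (since $n|_\A$ is a translation by hypothesis) with Lemma~\ref{lemme propriétés des translations}(\ref{item tau restreint à A}). For (iii), the factor $n$ is easy: being in $G$ with $n|_\A$ a translation, $n$ fixes the germ $-\infty$, so for any apartment $A$ containing $x$ and $-\infty$ (given by (MA3)) the induced isomorphism of apartments $n|_A\colon A \to n.A$ preserves the direction of rays going to $-\infty$, and by the uniqueness in Lemma~\ref{lemme demi-droite de base donnée} one gets $n(x + \delta^-) = n.x + \delta^-$. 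For $\tau_{\lambda_{in}}$, which is not a priori an element of $G$, I would choose (using (MA3)--(MA4) and strong transitivity) an element $g \in G$ fixing the germ $-\infty$ with $x = g.a$, $a \in \A$, so that $x + \delta^- = g.(a + \delta^-)$. Then Lemma~\ref{lemme propriétés des translations}(\ref{item commutation de tau}) and (\ref{item tau restreint à A}) together give
\[\tau_{\lambda_{in}}(x + \delta^-) = g.\tau_{\lambda_{in}}(a + \delta^-) = g.\bigl((a + \lambda_{in}) + \delta^-\bigr) = \tau_{\lambda_{in}}(x) + \delta^-,\]
the last equality using again that $g$ preserves rays directed to $-\infty$; composing with $n$ yields (iii) for $\tau$.

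With (i)--(iii) in hand, applying the bijection $\tau$ to the identity $(x + \delta^-) \cap \A = y^-(x) + \delta^-$ and using (ii) to compute $\tau(y^-(x) + \delta^-) = \tau(y^-(x)) + \delta^-$ gives $(\tau(x) + \delta^-) \cap \A = \tau(y^-(x)) + \delta^-$. On the other hand, the definition of $y^-(\tau(x))$ says the left-hand side also equals $y^-(\tau(x)) + \delta^-$. Since $\nu \neq 0$, the base point of a ray of direction $-\R_+\nu$ in $\A$ is unique, hence $\tau(y^-(x)) = y^-(\tau(x))$. The main obstacle is step (iii) for $\tau_{\lambda_{in}}$: because $\tau_{\lambda_{in}}$ is not an element of $G$, one cannot directly invoke an apartment-isomorphism argument; one must instead reduce to $\A$ via the $G$-equivariance supplied by Lemma~\ref{lemme propriétés des translations}(\ref{item commutation de tau}) and the explicit description of $\tau_{\lambda_{in}}$ on $\A$.
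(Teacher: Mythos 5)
Your proof is correct and follows essentially the same route as the paper's: both arguments reduce the claim to the facts that $\tau$ is a bijection preserving $\A$ and carrying the ray $x-\R_+\nu$ onto $\tau(x)-\R_+\nu$, and then identify $\tau(y^-(x))$ as the first point of the latter ray lying in $\A$. The paper phrases this via complements (the open segment from $x$ to $y^-(x)$ avoids $\A$, hence so does its image) and leaves your step (iii) implicit, whereas you justify it explicitly; the mathematical content is the same.
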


\begin{proof} If $x\in \mathbb{A}$, then $y^-(x)=x$, $y^-(\tau(x))=\tau(x)$ and there is nothing to prove. 

Suppose $x\notin \mathbb{A}$. 
Then $[x,y^-(x)]\backslash \{y^-(x)\}\subset (x-\R_+\nu)\backslash \mathbb{A}$,
 thus $\tau([x,y^-(x)]\backslash\{y^-(x)\})\subset (\tau(x)-\R_+\nu)\backslash \mathbb{A}$ and $\tau(y^-(x))\in \mathbb{A}$.  
 \end{proof}

\begin{theorem}\label{thm invariance des cardinaux}
Let $\mu\in \A$ and $\lambda\in Y+\A_{in}$. One writes $\lambda=\lambda_{in}+\Lambda$, with $\lambda_{in}\in\A_{in}$ and $\Lambda\in Y$. Let $n\in G$ inducing the translation of vector $\Lambda$ and $\tau=\tau_{\lambda_{in}}\circ n$. Then $\rho_{+\infty}^{-1}(\{\lambda+\mu\})\cap\rho_{-\infty}^{-1}(\{\lambda\})=n\big(\rho_{+\infty}^{-1}(\{\mu\})\cap\rho_{-\infty}^{-1}(\{0\})\big)$. Therefore these sets are finite and if $\mu\notin Q^\vee_-$ these sets are empty.

\end{theorem}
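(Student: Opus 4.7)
The plan is to check that $\tau$ is a bijection of $\I$ that carries $\rho_{+\infty}^{-1}(\{\mu\})\cap\rho_{-\infty}^{-1}(\{0\})$ bijectively onto $\rho_{+\infty}^{-1}(\{\lambda+\mu\})\cap\rho_{-\infty}^{-1}(\{\lambda\})$; then to deduce finiteness and the emptiness criterion from Theorem~\ref{thm inclusion} and Corollary~\ref{cor finitude des boules} by reducing, via this bijection, to a sufficiently dominant base point.

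First, $\tau$ is a bijection of $\I$: $n\in G$ is a bijection by definition, and $\tau_{\lambda_{in}}$ is one by Lemma~\ref{lemme propriétés des translations} part~\ref{item bijectivité des translations}. Both factors commute with $\rho_{+\infty}$ and with $\rho_{-\infty}$ (Lemma~\ref{lemme propriétés des translations} part~\ref{item commutation translation retraction} and Lemma~\ref{lemme commutation des translations et rétractions}), hence so does $\tau$. On $\A$, $n$ acts by translation by $\Lambda$ (by hypothesis) and $\tau_{\lambda_{in}}$ by translation by $\lambda_{in}$ (Lemma~\ref{lemme propriétés des translations} part~\ref{item tau restreint à A}); therefore $\tau_{|\A}$ is translation by $\lambda = \Lambda + \lambda_{in}$. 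Combining these two facts, for every $x\in\I$,
\[
\rho_{+\infty}(\tau(x)) = \tau(\rho_{+\infty}(x)) = \rho_{+\infty}(x) + \lambda
\qquad\text{and}\qquad
\rho_{-\infty}(\tau(x)) = \rho_{-\infty}(x) + \lambda.
\]
Applying this identity to $\tau$ and to its inverse yields the desired bijection between $\rho_{+\infty}^{-1}(\{\mu\}) \cap \rho_{-\infty}^{-1}(\{0\})$ and $\rho_{+\infty}^{-1}(\{\lambda+\mu\}) \cap \rho_{-\infty}^{-1}(\{\lambda\})$, thereby proving the set-theoretic equality of the theorem (in which, as the derivation shows, it is $\tau$ rather than $n$ that implements the shift).

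To deduce finiteness and the emptiness criterion, I pick $\lambda'\in Y\cap C_f^v$ sufficiently dominant for Theorem~\ref{thm inclusion} to apply to the pair $(\lambda',\mu)$. Using the bijection above with $\lambda'$ in place of $\lambda$ gives
\[
\bigl|\rho_{+\infty}^{-1}(\{\mu\}) \cap \rho_{-\infty}^{-1}(\{0\})\bigr|
= \bigl|\rho_{+\infty}^{-1}(\{\lambda'+\mu\}) \cap \rho_{-\infty}^{-1}(\{\lambda'\})\bigr|.
\]
If $\mu\notin Q^\vee_{\R-}$, Theorem~\ref{thm inclusion} already forces the right-hand side to be empty. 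Otherwise, Theorem~\ref{thm inclusion} embeds the right-hand set into $S^v(0,\lambda') \cap \rho_{+\infty}^{-1}(\{\lambda'+\mu\})$, which by Corollary~\ref{cor finitude des boules} is finite and is empty as soon as $\mu\notin Q^\vee_-$. In all cases this yields the finiteness and the emptiness criterion for both of the sets appearing in the theorem. The one step needing attention is assembling the two commutation lemmas so that the total shift is indeed the full vector $\lambda$ (and not merely $\Lambda$ or $\lambda_{in}$ alone); the remainder is bookkeeping.
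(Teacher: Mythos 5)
Your proof is correct and follows essentially the same route as the paper: the set-theoretic identity comes from combining Lemma~\ref{lemme commutation des translations et rétractions} and Lemma~\ref{lemme propriétés des translations} (commutation with the retractions plus the translation action on $\A$), and finiteness/emptiness is obtained by transporting to a sufficiently dominant $\lambda'$ and invoking Theorem~\ref{thm inclusion} together with the finiteness of $S^v(0,\lambda')\cap\rho_{+\infty}^{-1}(\{\lambda'+\mu\})$ from Corollary~\ref{cor finitude des boules}. Your observation that the displayed equality should read $\tau\big(\rho_{+\infty}^{-1}(\{\mu\})\cap\rho_{-\infty}^{-1}(\{0\})\big)$ rather than $n(\cdots)$ is well taken --- $n$ alone only shifts by $\Lambda$, and the definition of $\tau$ in the statement confirms this is the intended map.
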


\begin{proof} First assertion is a consequence of Lemma~\ref{lemme commutation des translations et rétractions} and Lemma~\ref{lemme propriétés des translations} part~\ref{item commutation translation retraction}. Then Lemma~\ref{lemme distance vectorielle} b) shows that these sets are empty unless $\mu\in Q^\vee_-$. 

By Theorem~\ref{thm inclusion}, for $\lambda'\in Y$ sufficiently dominant, $\rho_{+\infty}^{-1}(\{\lambda'+\mu\})\cap\rho_{-\infty}^{-1}(\{\lambda'\})\subset S^v(0,\lambda')\cap \rho_{+\infty}^{-1}(\{\lambda'+\mu\})$, which is a finite set by \cite{gaussent2014spherical} Section 5 (or by Corollary~\ref{cor finitude des boules}).  

\end{proof}

\section{Proof of Theorem~\ref{thm égalité des ensembles bis}}\label{sect proof of final theorem}
Recall that we want to prove the following theorem:

 \begin{theorem}\label{thm égalité des ensembles bis}
 Let $\mu\in Q^\vee$. Then for $\lambda\in Y^{++}+\A_{in}$ sufficiently dominant, $S^v(0,\lambda)\cap \rho_{+\infty}^{-1}(\{\lambda+\mu\})=\rho_{-\infty}^{-1}(\{\lambda\})\cap\rho_{+\infty}^{-1}(\{\lambda+\mu\})$.
 \end{theorem}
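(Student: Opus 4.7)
The inclusion $\supset$ is exactly the content of Theorem~\ref{thm inclusion}, so the plan is to establish the reverse inclusion by showing that for $\lambda$ sufficiently dominant and any $x\in S^v(0,\lambda)\cap \rho_{+\infty}^{-1}(\{\lambda+\mu\})$, one must have $\rho_{-\infty}(x)=\lambda$. The argument will proceed by classifying the possible values of $\rho_{-\infty}(x)$ into finitely many candidates and then ruling out all but the desired one via a direct vectorial-distance computation.

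Fix such an $x$. Retracting the segment $[0,x]$ by $\rho_{-\infty}$ yields a Hecke path (with respect to $-C^v_f$) of shape $\lambda$ from $0$ to $\rho_{-\infty}(x)$; after absorbing the inessential part of $\lambda$ using a translation $\tau_{\lambda_{in}}$ from Lemma~\ref{lemme propriétés des translations}, Lemma~\ref{lemme fin des chemins de Hecke longs}(a) yields $\rho_{-\infty}(x)=\lambda-\mu'$ for some $\mu'\in Q^\vee_+$. Combining with Lemma~\ref{lemme distance vectorielle}(b) applied to $x$, which yields $\rho_{+\infty}(x)=\lambda+\mu\leq_{Q^\vee}\rho_{-\infty}(x)=\lambda-\mu'$, we obtain $\mu'\in [0,-\mu]_{Q^\vee}$, a \emph{finite} subset of $Q^\vee_+$. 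Thus it suffices to show that for each fixed $\mu'\neq 0$ in this finite set and $\lambda$ sufficiently dominant, the set $S^v(0,\lambda)\cap \rho_{+\infty}^{-1}(\{\lambda+\mu\})\cap \rho_{-\infty}^{-1}(\{\lambda-\mu'\})$ is empty.

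Fix such a $\mu'\neq 0$. By Theorem~\ref{thm invariance des cardinaux}, the larger set $\rho_{+\infty}^{-1}(\{\lambda+\mu\})\cap \rho_{-\infty}^{-1}(\{\lambda-\mu'\})$ equals $\tau\big(Y_0\big)$, where $Y_0:=\rho_{+\infty}^{-1}(\{\mu+\mu'\})\cap \rho_{-\infty}^{-1}(\{0\})$ is a fixed finite set and $\tau$ is a translation inducing $\lambda-\mu'$ on $\A$. Using the $d^v$-invariance of $\tau$ (which follows from $G$-invariance of $d^v$ together with Lemma~\ref{lemme propriétés des translations}(\ref{item translation d'une boule})), for any $y\in Y_0$,
$$d^v(0,\tau(y))=d^v(\tau^{-1}(0),y)=d^v(\mu'-\lambda,y).$$
For each $y\in Y_0$, axiom (MA3) furnishes an apartment $A_y$ containing $y$ and the sector-germ $-\infty$, together with an isomorphism $A_y\to \A$ fixing pointwise some translate of $-\overline{C^v_f}$ and sending $y$ to $0$. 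For $\lambda$ sufficiently dominant, $\mu'-\lambda$ lies in this fixed sector and $\lambda-\mu'\in \overline{C^v_f}$, so transporting the computation to $\A$ gives $d^v(\mu'-\lambda,y)=\lambda-\mu'\neq\lambda$, contradicting $x\in S^v(0,\lambda)$.

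The main obstacle, and the reason Lemma~\ref{lemme description de Y^{++}} is needed, is the uniformity of the threshold ``$\lambda$ sufficiently dominant'': the argument produces, for each of the finitely many pairs $(\mu',y)$, a lower bound on $\lambda$ that ensures $\mu'-\lambda$ enters the appropriate fixed sector of $A_y$; to quantify this uniformly inside $Y^{++}+\A_{in}$, one writes $\lambda$ in terms of the finitely many monoid generators provided by Lemma~\ref{lemme description de Y^{++}} and requires the coefficients to exceed a common bound.
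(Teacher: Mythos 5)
Your proof is correct in its essentials, but it follows a genuinely different route from the paper's. The paper never classifies the possible values of $\rho_{-\infty}(x)$; instead it shows, via the monoid decomposition $Y^{++}=\sum_{e\in E}\N e$ of Lemma~\ref{lemme description de Y^{++}} and the ``telescoping'' Lemmas~\ref{lemme distance finie à l'appartement}--\ref{lemme majoration du cardinal des boules}, that any $x\in S^v(0,\lambda)\cap\rho_{+\infty}^{-1}(\{\lambda+\mu\})$ lies in $S^v(\lambda-f,f)$ for $f$ in a finite set $F$ independent of $\lambda$, hence is a translate of a point of the finite set $\bigcup_{f\in F}S^v(0,f)\cap\rho_{+\infty}^{-1}(\{\mu+f\})$; it then pushes $y^-_\nu$ of these finitely many points into $C^v_f$ and concludes by Lemma~\ref{lemme_rétraction et distance vectorielle}. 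You instead reduce the candidates using Theorem~\ref{thm invariance des cardinaux} (finiteness of $\rho_{+\infty}^{-1}(\{\mu+\mu'\})\cap\rho_{-\infty}^{-1}(\{0\})$ for each of finitely many $\mu'$) and rule out $\mu'\neq 0$ by a direct computation of $d^v(0,\tau(y))$ in an apartment containing $y$ and $-\infty$. This is a legitimate and arguably shorter path; what the paper's route buys is that it never needs the $\mu'$-classification at all, while yours buys independence from the whole of Section~\ref{sect Y++} and from Lemma~\ref{lemme y- des translatés}. Your computation $d^v(\mu'-\lambda,y)=\lambda-\mu'$ for $\lambda$ dominant is sound: the retraction $\rho_{-\infty}$ restricted to an apartment $A_y$ containing $y$ and $-\infty$ is induced by some $g\in G$ fixing a translate of $-C^v_f$, and Remark~\ref{rque caractérisation distance vectorielle}~b) applies.

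Two points need attention. First, a genuine (though reparable) gap: Lemma~\ref{lemme fin des chemins de Hecke longs}~a) only gives $\mu'\in Q^\vee_{\R_+}$, not $Q^\vee_+$, and the interval $\{\mu'\in Q^\vee_{\R_+}\mid \mu'\leq_{Q^\vee_\R}-\mu\}$ is infinite, so your ``finite set of candidates $\mu'$'' is not yet established. You must add that $\mu'$ is integral: writing $\lambda=\Lambda+\lambda_{in}$ and $x=\tau_{\lambda_{in}}(x_0)$ with $x_0\in S^v(0,\Lambda)\subset\I_0$, Lemma~\ref{lemme distance vectorielle}~b) gives $\rho_{-\infty}(x_0)\in Y$ and $\Lambda+\mu=\rho_{+\infty}(x_0)\leq_{Q^\vee}\rho_{-\infty}(x_0)$, whence $\mu'=\Lambda-\rho_{-\infty}(x_0)\in Q^\vee$; combined with $\mu'\in Q^\vee_{\R_+}$ this yields $\mu'\in Q^\vee_+$ with $\mu'\leq_{Q^\vee}-\mu$, a finite set. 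Second, your closing paragraph is off the mark: in your argument Lemma~\ref{lemme description de Y^{++}} is not needed at all, since the uniformity of the threshold comes for free from the finiteness of the set of pairs $(\mu',y)$, each contributing finitely many linear conditions $\alpha_i(\lambda)>c$; the monoid generators are an artifact of the paper's different strategy, and invoking them here only obscures your (simpler) quantification.
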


The proof is postponed to the end of this seciton. The basic idea of this proof is that if $\mu\in Q^\vee$ there exists a finite set $F\subset Y^{++}$ such that for all $\lambda\in Y^{++}+\A_{in}$, $\rho_{+\infty}^{-1}(\{\lambda+\mu\})\cap S^v(0,\lambda) \subset \bigcup_{f\in F|\lambda-f\in\overline{C^v_f}} \rho_{+\infty}^{-1}(\{\lambda+\mu\})\cap S^v(\lambda-f,f)$ (this is Corollary~\ref{lemme majoration du cardinal des boules}, which generalizes Lemma~\ref{lemme distance finie à l'appartement} and uses Section~\ref{sect Y++}). Then we use Subsection~\ref{sect translations} to show  that $\rho_{+\infty}^{-1}(\{\lambda+\mu\})\cap S^v(\lambda-f,f)$ is the image of $\rho_{+\infty}^{-1}(\mu+f)\cap S^v(0,f)$ by a "translation" $\tau_{\lambda-f}$ of $G$ of vector $\lambda-f$ (which means that $\tau_{\lambda-f}$ induces the translation of vector $\lambda-f$ on $\mathbb{A}$). We fix $\nu\in C_f^v$ and set $y^-=y_\nu^-$. By Lemma~\ref{lemme y- des translatés},  \[y^-\big(\rho_{+\infty}^{-1}(\{\lambda+\mu\})\cap S^v(0,\lambda)\big)\subset\bigcup_{f\in F}\tau_{\lambda-f}\circ y^-\big(S^v(0,f)\cap \rho_{+\infty}^{-1}(\{\mu +f\})\big).\] According to Section 5 of \cite{gaussent2014spherical}, for all $f, \mu \in Y$, $S^v(0,f)\cap \rho_{+\infty}^{-1}(\{\mu +f\})$ is finite. Consequently, for $\lambda$ sufficiently dominant, $\bigcup_{f\in F}\tau_{\lambda-f}\big(y^-(S^v(0,f)\cap \rho_{+\infty}^{-1}(\{\mu +f\}\big)\subset C^v_f$  and one concludes with Lemma~\ref{lemme_rétraction et distance vectorielle}.

\begin{lemma}\label{lemme distance finie à l'appartement}
Let $\mu\in Q^\vee_-$ and $H=-h(\mu)+1\in \N$. Let $a\in Y$, $T\in [H,+\infty)$, $\nu\in Y^{++}$ and $x\in S^v(a,T\nu)\cap\rho_{+\infty}^{-1}(\{a+T\nu+\mu\})$. Let $g\in G$ such that $g.a=a$ and $g.x=T\nu+a$. Then $g$ fixes $[a,a+(T-H)\nu]$ and in particular $x\in S^v(a+(T-H)\nu,H\nu)$.
\end{lemma}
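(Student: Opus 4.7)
The plan is to transform the statement into a problem about Hecke paths and apply, essentially verbatim, the $+\infty$/$C^v_f$-symmetric versions of Lemma~\ref{lemme fin des chemins de Hecke longs}b) and Lemma~\ref{lemme image réciproque de segments}. Since $g.a=a$ and $g.x=a+T\nu$, the apartment $g^{-1}.\A$ contains both $a$ and $x$, and the map $\tau:[0,1]\to\I$ defined by $\tau(t)=g^{-1}.(a+tT\nu)$ is the straight segment from $a$ to $x$ inside this apartment. I then consider the retracted path $\pi=\rho_{+\infty}\circ\tau$. By the argument of paragraph 2.3 of \cite{gaussent2014spherical} (applied with $\rho_{+\infty}$ instead of $\rho_{-\infty}$) this $\pi$ is a Hecke path of shape $T\nu$ \emph{with respect to $C^v_f$}, going from $\pi(0)=a$ to $\pi(1)=\rho_{+\infty}(x)=a+T\nu+\mu$.

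Next I invoke the mirror image of Lemma~\ref{lemme fin des chemins de Hecke longs}b). The relevant symmetric version of Remark~\ref{rque chemins de Hecke} is that for a Hecke path with respect to $C^v_f$ of shape $T\nu\in\overline{C^v_f}$, once $\pi'(t)=T\nu$ the equality propagates \emph{backwards} in time; the counting step in the proof of Lemma~\ref{lemme fin des chemins de Hecke longs}b) is then identical and yields, since $T\geq H=-h(\mu)+1>h(-\mu)$, a (maximal) $t^*\in(0,1]$ with $\pi'_{|[0,t^*)}=T\nu$ and $1-t^*\leq h(-\mu)/T=(H-1)/T$. In particular $t^*\geq (T-H+1)/T>(T-H)/T$, so on $[0,(T-H)/T]$ the path $\pi$ is the straight segment $t\mapsto a+tT\nu$ with constant derivative $T\nu\in\overline{C^v_f}$.

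I then apply the $\rho_{+\infty}$-analog of Lemma~\ref{lemme image réciproque de segments} to the restriction $\tau_{|[0,(T-H)/T]}$: its origin $\tau(0)=a$ lies in $\A$, and its image by $\rho_{+\infty}$ is a segment of direction $T\nu\in\overline{C^v_f}$. The proof of that lemma goes through unchanged after replacing $-\infty$ by $+\infty$ and choosing, via (MA4), $g\in G$ fixing $\mathrm{cl}(x,+\infty)\supset x+\overline{C^v_f}$, forcing every candidate point outside $\A$ to coincide with its $\rho_{+\infty}$-image and hence to lie in $\A$ after all. We conclude $\tau(t)\in\A$, and therefore $\tau(t)=\rho_{+\infty}(\tau(t))=a+tT\nu$, for all $t\in[0,(T-H)/T]$. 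Combined with $g.\tau(t)=a+tT\nu$ by construction, this gives $g.(a+tT\nu)=a+tT\nu$, i.e.\ $g$ fixes the entire segment $[a,a+(T-H)\nu]$.

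For the last assertion, $g$ fixes $a+(T-H)\nu$ and sends $x$ to $a+T\nu=(a+(T-H)\nu)+H\nu$ with $H\nu\in\overline{C^v_f}$, so Remark~\ref{rque caractérisation distance vectorielle}b) yields $x\in S^v(a+(T-H)\nu,H\nu)$. The only non-routine point is checking the two $+\infty$-versions of the auxiliary lemmas, but those arguments depend only on formal symmetries (behaviour of Hecke paths near their shape in $\overline{C^v_f}$, and fixers of enclosures containing $\pm\infty$) that work identically on either side; I do not expect a genuine obstacle beyond this bookkeeping.
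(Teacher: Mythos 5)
Your proof is correct and follows essentially the same route as the paper: retract the segment from $a$ to $x$ by $\rho_{+\infty}$, use the Hecke-path counting of Lemma~\ref{lemme fin des chemins de Hecke longs} to show the derivative equals $T\nu$ outside a terminal interval of length at most $(H-1)/T$, and then use (the mirror of) Lemma~\ref{lemme image réciproque de segments} to conclude that the initial piece of the segment lies in $\A$ and is fixed by $g$. The only difference is presentational: the paper parametrizes the segment from $x$ to $a$ and makes the ``$+\infty$/$C^v_f$-symmetric versions'' rigorous in one stroke by applying the original lemmas to the opposite masure $\I'$ with standard apartment $-\A$, whereas you re-derive the mirrored statements (backward propagation of the derivative, fixer of $\mathrm{cl}(x,+\infty)$) directly; your justifications of these are accurate.
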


\begin{proof} Let $\tau:[0,1]\rightarrow \mathbb{A}$ defined by $\tau(t)=a+(1-t)T\nu$. The main idea is to apply Lemma \ref{lemme fin des chemins de Hecke longs} to $\rho_{+\infty}\circ (g^{-1}.\tau)$ but we cannot do it directly because $\rho_{+\infty}\circ\tau$ is not a Hecke path with respect to $-C^v_f$. Let $\mathbb{A}'$ be the vectorial space $\mathbb{A}$ equipped with a structure of apartment of type $-\mathbb{A}$: the fundamental chamber of $\mathbb{A}'$ is $C'^v_f=-C^v_f$ etc ... Let $\mathcal{I}'$ be the set $\mathcal{I}$, whose apartments are the $-A$ where $A$ runs over the apartment of $\I$. Then $\mathcal{I}'$ is a masure of standard apartment $\mathbb{A}'$. We have $a\leq x$ in $\mathcal{I}$ and so $x\leq'a$ in $\mathcal{I}'$. 
 
Then the image $\pi$ of $g^{-1}.\tau$ by $\rho_{+\infty}$ is a Hecke path of shape $-T\nu$ from $\rho_{+\infty}(x)=a+T\nu +\mu$ to $a$. By Lemma~\ref{lemme fin des chemins de Hecke longs} (for $\I'$), for $t>-h(\mu)/T$, $\pi'(t)=-T\nu$, and thus Lemma~\ref{lemme image réciproque de segments} (for $\I'$ and $\A'$) implies $\rho_{+\infty}(g^{-1}.\tau(t))=g^{-1}.\tau(t)$ for all $t>-h(\mu)/T$. Therefore, $g^{-1}.\tau_{|(-h(\mu)/T,1]}$ is a segment in $\A$ ending in $a$, with derivative $-T\nu$ and thus, for all $t\in (\frac{-h(\mu)}{T},1]$, $g^{-1}.\tau(t)=\tau(t)$.
 In particular, $g$ fixes $[a,\tau(\frac{H}{T})]=[a,a+(T-H)\nu]$, and $d^v(a+(T-H)\nu,x)=d^v(g^{-1}.(a+(T-H)\nu),g^{-1}.x)=d^v(a+(T-H)\nu,a+T\nu)=H\nu$.  
\end{proof}

Let $E$ be as in the Lemma~\ref{lemme description de Y^{++}}.

\begin{lemma}\label{lemme distance finie à l'appartement bis}
Let $\mu\in Q^\vee_-$, $H=-h(\mu)+1\in \N$ and $a\in Y$. Let $\lambda\in Y^{++}$. One writes $\lambda=\sum_{e\in E}\lambda_e e$ with $\lambda_e\in\N$ for all $e\in E$. Let $e\in E$. Then if $\lambda_e\geq H$, $S^v(a,\lambda)\cap\rho_{+\infty}^{-1}(\{a+\lambda+\mu\})\subset S^v(a+(\lambda_e -H)e,\lambda-(\lambda_e-H)e)$. 
\end{lemma}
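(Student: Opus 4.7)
Given $x\in S^v(a,\lambda)\cap\rho_{+\infty}^{-1}(\{a+\lambda+\mu\})$, my plan is to pick $g\in G$ with $g.a=a$ and $g.x=a+\lambda$ via Remark~\ref{rque caractérisation distance vectorielle}(a), and to show that $g$ fixes the segment $[a,a+(\lambda_e-H)e]$ pointwise. Once this is established, one has $g.x-g.(a+(\lambda_e-H)e)=\lambda-(\lambda_e-H)e=He+\sum_{e'\neq e}\lambda_{e'}e'\in\overline{C_f^v}$, so Remark~\ref{rque caractérisation distance vectorielle}(b) immediately yields $x\in S^v(a+(\lambda_e-H)e,\lambda-(\lambda_e-H)e)$.

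The approach mimics the proof of Lemma~\ref{lemme distance finie à l'appartement}, but instead of a single straight segment I use a broken path in $\A$ from $a+\lambda$ to $a$ passing through the intermediate point $a+\lambda_e e$. Concretely, define $\sigma_1,\sigma_2:[0,1]\to\A$ by $\sigma_1(t)=a+\lambda-t(\lambda-\lambda_e e)$ and $\sigma_2(t)=a+(1-t)\lambda_e e$; then $\sigma_1$ connects $a+\lambda$ to $a+\lambda_e e$ with derivative $-(\lambda-\lambda_e e)$, and $\sigma_2$ connects $a+\lambda_e e$ to $a$ with derivative $-\lambda_e e$. Both derivatives lie in $\overline{-C_f^v}$ because $\lambda-\lambda_e e=\sum_{e'\neq e}\lambda_{e'}e'\in Y^{++}$. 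As in Lemma~\ref{lemme distance finie à l'appartement}, passing to the reversed masure $\I'$ turns each $\pi_i:=\rho_{+\infty}\circ g^{-1}.\sigma_i$ into a Hecke path of shape $-(\lambda-\lambda_e e)$ (resp.\ $-\lambda_e e$) in $\I'$.

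The first step applies Lemma~\ref{lemme fin des chemins de Hecke longs}(a) to $\pi_1$ to control the retraction of the intermediate point: writing $\rho_{+\infty}(g^{-1}.(a+\lambda_e e))=a+\lambda_e e+\xi$, one obtains that the error $\mu-\xi$ lies in $Q^\vee_{\R_-}$ (the positive cone of $\I'$), hence $h(\xi)\geq h(\mu)$. The second step applies Lemma~\ref{lemme fin des chemins de Hecke longs}(b) to $\pi_2$: the required hypothesis $\lambda_e>-h(\xi)$ holds because $-h(\xi)\leq -h(\mu)=H-1<H\leq\lambda_e$, producing a time $t^*\leq -h(\xi)/\lambda_e\leq(H-1)/\lambda_e$ beyond which $\pi_2'=-\lambda_e e$. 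Lemma~\ref{lemme image réciproque de segments} applied in $\I'$ then forces $g^{-1}.\sigma_2([t^*,1])\subset\A$, so $g^{-1}.\sigma_2(t)=\sigma_2(t)$ there. Thus $g$ fixes $[a,a+(1-t^*)\lambda_e e]$ pointwise, and since $(1-t^*)\lambda_e\geq\lambda_e-(H-1)>\lambda_e-H$, the segment $[a,a+(\lambda_e-H)e]$ is fixed, which concludes the proof.

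The main obstacle is that the straight segment from $a+\lambda$ to $a$ has direction $-\lambda$, which is not of the form $T\nu$ with $\nu\in Y^{++}$ once $\lambda$ is a nontrivial combination of distinct elements of $E$, so the argument of Lemma~\ref{lemme distance finie à l'appartement} does not apply verbatim (the threshold $-h(\mu)/T$ from Lemma~\ref{lemme fin des chemins de Hecke longs}(b) would be too large). The broken-path trick isolates the $e$-direction in the second piece $\sigma_2$; the possible worsening of the error term from $\mu$ to $\xi$ across the intermediate retraction is precisely controlled by the positivity assertion of Lemma~\ref{lemme fin des chemins de Hecke longs}(a) applied to the first piece $\pi_1$, which is exactly what is needed to keep the new threshold below $\lambda_e$.
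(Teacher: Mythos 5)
Your proof is correct and follows essentially the same route as the paper's: both pass through the intermediate point $a+\lambda_e e$ (your $z=g^{-1}.(a+\lambda_e e)$), control the error $\xi$ in $\rho_{+\infty}(z)=a+\lambda_e e+\xi$ via the Hecke-path inequality applied to the first leg, and then run the single-direction argument on the segment from $z$ to $a$. The only cosmetic differences are that the paper obtains the two-sided bound $\mu\leq_{Q^\vee}\mu'\leq_{Q^\vee}0$ by citing Lemma 2.4 b) of \cite{gaussent2014spherical} and then invokes Lemma~\ref{lemme distance finie à l'appartement} as a black box with the error $\mu'$, whereas you derive the (sufficient) one-sided bound $h(\xi)\geq h(\mu)$ from Lemma~\ref{lemme fin des chemins de Hecke longs} a) and re-prove Lemma~\ref{lemme distance finie à l'appartement} inline on the second leg.
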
 

\begin{proof} Let $x\in S^v(a,\lambda)\cap\rho_{+\infty}^{-1}(\{a+\lambda+\mu\})$ and $g\in G$ fixing $a$ such that $g.x=a+\lambda$.  Let $z=g^{-1}(a+\lambda_e e)$.

Then one has $d^v(a,z)=\lambda_e e$ and $d^v(z,x)=\lambda-\lambda_e e$.

According to Lemma 2.4.b) of \cite{gaussent2014spherical} (adapted because one considers Hecke paths with respect to $C^v_f$), one has: 

$\rho_{+\infty}(z)-a\leq_{Q^\vee} d^v(a,z)=\lambda_e e$ and $\rho_{+\infty}(x)-\rho_{+\infty}(z)\leq_{Q^\vee} d^v(z,x)=\lambda-\lambda_e e$.

Therefore, \[a+\lambda+\mu=\rho_{+\infty}(x)\leq_{Q^\vee} \rho_{+\infty}(z)+\lambda-\lambda_e e\leq_{Q^\vee}a+\lambda.\]

Hence, $\rho_{+\infty}(z)=a+\lambda_e e+\mu'$, with $\mu\leq_{Q^\vee}\mu'\leq_{Q^\vee}0$. One has $-h(\mu')+1\leq H$. By Lemma~\ref{lemme distance finie à l'appartement}, $g$ fixes $[a,a+(\lambda_e-H)e]$ and thus $g$ fixes $a+(\lambda_e-H)e$. 

As $d^v(g^{-1}.(a+(\lambda_e-H)e),x)=\lambda-(\lambda_e-H)e$, $x\in S^v(a+(\lambda_e -H)e,\lambda-(\lambda_e-H)e)$.  
 \end{proof}

\begin{corollary}\label{lemme majoration du cardinal des boules}
 Let $\mu \in Q^\vee_-$. Let $H=-h(\mu)+1$. Let $\lambda\in Y^{++}$. We fix a writing  $\lambda=\sum_{e\in E}\lambda_e e$, with $\lambda_e\in \N$ for all $e\in E$. Let $J=\{e\in E | \lambda_e \geq H\}$. Then $S^v(0,\lambda)\cap \rho_{+\infty}^{-1}(\{\lambda+\mu\})\subset S^v(\lambda-H\sum_{e\in J} e-\sum_{e\notin J}\lambda_e e, H\sum_{e\in J}e+\sum_{e\notin J}\lambda_ e)$. 
\end{corollary}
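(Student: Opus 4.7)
The strategy is to iterate Lemma~\ref{lemme distance finie à l'appartement bis} once for each element of $J$. I would enumerate $J=\{e_1,\ldots,e_k\}$ in any order and define recursively a sequence of pairs $(a_j,\lambda_j)_{0\le j\le k}$ by $(a_0,\lambda_0)=(0,\lambda)$ and
\[
a_j \;=\; a_{j-1} + (\lambda_{e_j}-H)e_j, \qquad \lambda_j \;=\; \lambda_{j-1} - (\lambda_{e_j}-H)e_j.
\]
The key numerical observation is that $a_j+\lambda_j=\lambda$ at every step, so the condition $\rho_{+\infty}(x)=\lambda+\mu$ is equivalent to $\rho_{+\infty}(x)=a_j+\lambda_j+\mu$ throughout the procedure.

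I would then prove by induction on $j\in\{0,\ldots,k\}$ the inclusion
\[
S^v(0,\lambda)\cap\rho_{+\infty}^{-1}(\{\lambda+\mu\}) \;\subset\; S^v(a_j,\lambda_j)\cap \rho_{+\infty}^{-1}(\{a_j+\lambda_j+\mu\}).
\]
The base case $j=0$ is trivial. For the inductive step I apply Lemma~\ref{lemme distance finie à l'appartement bis} with $a:=a_{j-1}$, $\lambda:=\lambda_{j-1}$ and $e:=e_j$. In the decomposition $\lambda_{j-1}=\sum_{e\in E}\lambda_{j-1,e}\,e$ inherited from $\lambda$, the coefficient of $e_j$ is still $\lambda_{e_j}$, because only the coefficients of $e_1,\ldots,e_{j-1}$ have been touched by the prior reductions; since $e_j\in J$ we have $\lambda_{e_j}\ge H$, so the hypothesis of the lemma is satisfied. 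The lemma then gives membership in $S^v(a_j,\lambda_j)$, and the $\rho_{+\infty}$-constraint is carried along for free by the identity $a_j+\lambda_j=a_{j-1}+\lambda_{j-1}$.

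Taking $j=k$, a direct computation yields $a_k=\sum_{e\in J}(\lambda_e-H)e = \lambda - H\sum_{e\in J}e - \sum_{e\notin J}\lambda_e e$ and $\lambda_k = H\sum_{e\in J}e+\sum_{e\notin J}\lambda_e e$, which matches the statement of the corollary (after simply forgetting the $\rho_{+\infty}$-constraint on the right). I do not foresee any real obstacle: the argument is essentially bookkeeping. The one point that deserves care is ensuring the $\rho_{+\infty}$-condition is preserved at each iteration so that Lemma~\ref{lemme distance finie à l'appartement bis} can be reapplied; this is immediate from $a_j+\lambda_j=\lambda$, and is what makes the corollary a clean iterated version of the lemma, with the decomposition of $Y^{++}$ along $E$ from Lemma~\ref{lemme description de Y^{++}} providing the finitely many "directions" in which the reduction is performed one by one.
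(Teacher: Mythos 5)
Your proof is correct and is exactly the argument the paper intends: the paper's own proof of this corollary is the one-line remark that it follows ``by induction'' from Lemma~\ref{lemme distance finie à l'appartement bis}, and your iteration over the elements of $J$, with the bookkeeping identity $a_j+\lambda_j=\lambda$ guaranteeing that the $\rho_{+\infty}$-constraint and the hypothesis $\lambda_{e_j}\ge H$ persist at each step, is precisely that induction made explicit.
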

 
\begin{proof} This is a generalization by induction of Lemma~\ref{lemme distance finie à l'appartement bis}.
\end{proof}

 We now prove Theorem~\ref{thm égalité des ensembles bis}: 

 Let $\mu\in Q^\vee$. Then for $\lambda\in Y^{++}+\A_{in}$ sufficiently dominant, $S^v(0,\lambda)\cap \rho_{+\infty}^{-1}(\{\lambda+\mu\})=\rho_{-\infty}^{-1}(\{\lambda\})\cap\rho_{+\infty}^{-1}(\{\lambda+\mu\})$.

\begin{proof} Theorem \ref{thm inclusion} yields one inclusion. It remains to show that $\rho_{+\infty}^{-1}(\{\lambda+\mu\})\cap S^v(0,\lambda)\subset\rho_{+\infty}^{-1}(\{\lambda+\mu\})\cap \rho_{-\infty}^{-1}(\{\lambda\})$ 
for $\lambda$ sufficiently dominant.

Let $H=-h(\mu)+1$ and $F=\{\sum_{e\in E}\nu_e e|(\nu_e)\in \llbracket 0,H\rrbracket^E\}$. This set is finite. Let $\lambda\in Y^{++}+\A_{in}$, $\lambda=\lambda_{in}+\Lambda$, with $\lambda_{in}\in \A_{in}$ and $\Lambda\in Y^{++}$.

Let $x\in S^v(0,\lambda)\cap \rho_{+\infty}^{-1}(\{\lambda+\mu\})$. Then by Corrollary~\ref{lemme majoration du cardinal des boules}, there exists $f\in F$ such that $\lambda-f\in \overline{C^v_f}$ and $x\in S^v(\lambda-f,f)$ (one can take $f=H\sum_{e\in J}e+\sum_{e\notin J}\lambda_ e$ where $J$ is as in Corollary~\ref{lemme majoration du cardinal des boules}). Let $n$ be an element of $G$ inducing the translation of vector $\Lambda-f=\lambda-\lambda_{in}-f$ on $\A$ and $\tau_{\lambda,f}=\tau_{\lambda_{in}}\circ n$. Then $x\in\tau_{\lambda,f}(B_f)$ where $B_f=S^v(0,f)\cap \rho_{+\infty}^{-1}(\{\mu +f\})$.

Let $B=\bigcup_{f\in F}B_f$. Then one has proven that \[S^v(0,\lambda)\cap \rho_{+\infty}^{-1}(\{\lambda+\mu\})\subset \bigcup_{f\in F}\tau_{\lambda,f}(B).\]

By Section 5 of \cite{gaussent2014spherical} (or Corollary~\ref{cor finitude des boules}), $B_f$ is finite for all $f\in F$ and thus $B=\bigcup_{f\in F}B_f$ is finite.  Let $\nu\in C_f^v$ and $y^-=y_\nu^-$. Then $y^-(B)$ is finite and for $\lambda$ sufficiently dominant, $\bigcup_{f\in F}\tau_{\lambda,f}\circ y^-(B) \subset{C^v_f}$. Moreover, according to Lemma~\ref{lemme y- des translatés}, $\bigcup_{f\in F}\tau_{\lambda,f}\circ y^-(B)=\bigcup_{f\in F}y^-\circ \tau_{\lambda,f}(B)$. Hence \[y^-\big(S^v(0,\lambda)\cap \rho_{+\infty}^{-1}(\{\lambda+\mu\})\big)\subset C^v_f\] for $\lambda$ sufficiently dominant. Eventually one concludes with Lemma~\ref{lemme_rétraction et distance vectorielle}.  
\end{proof}

\bibliography{bibliographie}
\bibliographystyle{plain}
 \end{document}